\documentclass[11pt]{amsart}  
\usepackage[T1]{fontenc}
\usepackage[dvips]{color}
\usepackage{amscd,amsfonts,amsmath,amssymb,
amsthm,amstext,latexsym,amstext}
\usepackage{enumerate,pifont,graphicx}
\usepackage[english]{babel}
\textwidth 16cm
\oddsidemargin -0cm
\evensidemargin -0cm

\newcommand{\Z}{{\mathbb Z}}

\newcommand{\R}{{\mathbb R}}
\newcommand{\C}{{\mathbb C}}
\newcommand{\N}{{\mathbb N}}

\def \CC {\overline{\mathbb C\,}}

\def\boA{{\mathcal A}}

\def\boF{{\mathcal F}}
\def\boG{{\mathcal G}}
\def\boH{{\mathcal H}}

\def\boV{{\mathcal V}}
\def\boZ{{\mathcal Z}}

\def\cqfd{\hfill$\Box$}
\def\Res{{\,\rm Res}}

\def\Re{{\rm Re}}
\def\Im{{\rm Im}}

\def\disp{\displaystyle}

\newtheorem{theorem}{Theorem}[section]
\newtheorem{lemma}[theorem]{Lemma}
\newtheorem{proposition}[theorem]{Proposition}
\newtheorem{remark}[theorem]{Remark}

\newtheorem{definition}[theorem]{Definition}
\newtheorem{hypothesis}[theorem]{Hypothesis}
\newtheorem{example}[theorem]{Example}

\author{Filippo Morabito}
\author{Martin Traizet}

\thanks{This work started when first author 
was affiliated to Laboratoire de Mathématiques et Physique 
Théorique UMR CNRS 6083, Université François Rabelais,
Parc Grandmont, 37200 Tours, France.
First author was supported by an ANR "Minimales" grant.}
\address{
School of Mathematics, KIAS Korea Institute for Advanced Study,
207-43, Cheongnyangni 2-dong, Dongdaemun-gu, 
Seoul, 130-722, Korea}
\email{morabito@kias.re.kr}
\address{Laboratoire de Mathématiques et Physique 
Théorique UMR CNRS 6083, Université François Rabelais,
Parc Grandmont, 37200 Tours, France} 
\email{martin.traizet@lmpt.univ-tours.fr}
\title{Non-periodic Riemann examples with handles}
\begin{document}
\begin{abstract}
We show the existence of  $1$-parameter families 
of non-periodic, complete, embedded  minimal surfaces in
euclidean space with infinitely many parallel planar ends.
 In particular we are able to produce finite genus 
 examples and quasi-periodic examples of infinite genus.
  \end{abstract}
\maketitle
\section{Introduction}
\label{intro}

The goal of this paper is to construct families of
complete, embedded minimal surfaces in euclidean space,
with infinitely many planar ends.
The classical examples of such surfaces
have been discovered in 19th century
by B. Riemann and are called Riemann minimal examples.
They have genus zero and are periodic.
W. Meeks, J. Pérez, A. Ros \cite{MPR2}
proved that they are the only
properly embedded minimal surfaces of genus zero with
infinitely many ends.

\medskip
Periodic examples have been constructed by the second author
in \cite{T} by adding handles, in a periodic way, to Riemann
examples. Our goal in this paper is to follow the same strategy
without assuming any periodicity.
The first class of examples that we obtain have finite genus:
\begin{theorem}
\label{th1}
For each integer $g\geq 1$, there exists a 1-parameter family of
complete, properly embedded minimal surfaces which have genus $g$ and
infinitely many planar ends. These surfaces have two limit ends.
\end{theorem}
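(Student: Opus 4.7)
The plan is to adapt the node-opening technique developed by the second author in \cite{T} to a non-periodic setting. One starts from a singular limit configuration: a biinfinite stack of parallel horizontal planes $\{P_n\}_{n\in\Z}$ with one node placed between each consecutive pair $P_n, P_{n+1}$, plus $g$ additional nodes inserted between a fixed consecutive pair $P_0, P_1$ to produce genus $g$ after opening. Each node is then replaced by a small catenoidal neck of scale $t>0$, and the $1$-parameter family of Theorem~\ref{th1} is obtained as $t$ varies in a neighborhood of $0$, with the nodal configuration as the ``limit.''

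More precisely, I would realize the desired surface by the Weierstrass representation on the noncompact Riemann surface $M_t$ obtained by the standard plumbing construction at every node. The unknowns are the horizontal positions $\xx=(x_n)_{n\in\Z}$ of the necks and a finite collection of parameters $\hh$ controlling the $g$ handles. These must be chosen so that (i) the Gauss map $g_t$ is well defined and has the correct zeros and poles at the ends, and (ii) all three Weierstrass $1$-forms have real periods along every homology class of $M_t$. Since the number of independent cycles is infinite, one must work in sequence spaces; the natural setting is $\linf(\Z)$ for the positions $\xx$ and the ``horizontal'' period equations, together with the finite-dimensional data associated to the handles, so that the relevant linearized operator lives in $\Llinf$.

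The core of the proof is the period problem at $t=0$. Expanding the periods as $t\to 0$, the leading-order equations become a discrete ``balance system'' of Newtonian type in which each neck position $x_n$ is governed by the sum of horizontal forces exerted by the other necks, with a $1/(x_n-x_m)$-type kernel. Periodic solutions of this balance system are essentially the classical Riemann configurations used in \cite{T}. What I need instead are non-periodic, bounded, two-sided sequences solving the balance equations: these form a $2$-parameter family (the two parameters encoding the asymptotic behavior as $n\to\pm\infty$, which is precisely what produces the two limit ends). One such solution is the periodic Riemann configuration itself, which serves as a base point around which to perturb.

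Given a non-degenerate balanced configuration, the full (non-linearized) period problem for small $t>0$ is solved by the implicit function theorem in $\linf$. The main obstacle, and the technical heart of the paper, is the invertibility of the linearized balance operator on $\linf(\Z)$: this is a bounded discrete second-difference-type operator, and one must construct a bounded right inverse with norm independent of the truncation, ideally by a Green's function estimate for the relevant discrete elliptic problem. Modulo this analytic step, the remaining pieces are standard: the $1$-parameter of the theorem is $t$ itself (the scale of the necks); properness and completeness follow from the Weierstrass construction; and embeddedness is obtained from a compactness argument using that the surfaces converge to the union of disjoint planes as $t\to 0$, so that for small $t$ the necks stay in disjoint slabs.
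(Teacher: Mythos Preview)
Your overall framework is right and matches the paper: node-opening on an infinite stack of spheres, Weierstrass data depending on neck positions, a balance (``force'') system at $t=0$, and the implicit function theorem in $\ell^\infty$. But there is a genuine gap at exactly the place you flag as the technical heart.

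The linearized balance operator you describe is, in the pure Riemann part of the configuration (one neck per level), the discrete second difference
\[
(u_k)_{k\in\Z}\longmapsto (2u_k-u_{k-1}-u_{k+1})_{k\in\Z}.
\]
This operator is \emph{not} invertible on $\ell^\infty(\Z)$: it is neither injective (constants and linear sequences lie in the kernel, the former bounded) nor surjective, and its Green's function grows like $|n|$, so no ``Green's function estimate for the relevant discrete elliptic problem'' can produce a bounded right inverse. The paper states this explicitly. Your proposed route therefore fails precisely at the step you identify as the main obstacle.

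The paper's fix is a change of both the unknowns and the equations. One passes from absolute positions $p_{k,i}$ to relative data
\[
\ell_k=p_{k,1}-p_{k-1,1},\qquad u_{k,i}=p_{k,i}-p_{k,1},
\]
and replaces the raw force equations $F_{k,i}=0$ by the equivalent system $F_{k,i}=0$ for $i\geq 2$ together with $G_k=G_0$, where $G_k=\sum_{i,j} c_k c_{k-1}/(p_{k,i}-p_{k-1,j})$ is the ``integrated'' interaction between levels $k$ and $k-1$. In these variables the linearization is no longer a discrete Laplacian; for the Riemann part it is $(\ell_k)\mapsto(1/\ell_k)$, whose differential is $-a^{-2}\,\mathrm{id}$, trivially invertible on $\ell^\infty$. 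For the genus part one uses an explicit finite balanced configuration of type $(1,g{+}1,1)$ (the paper's Example~\ref{example3}), checks it is non-degenerate as a finite block, and concatenates it with infinitely many trivial $(1,1)$ blocks; the concatenation makes the full linearization block-diagonal with uniformly bounded inverse blocks, hence an isomorphism of $\ell^\infty$.

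Two smaller points. First, you still owe the reader an actual balanced, non-degenerate configuration with $g{+}1$ necks at one level; this is not a perturbation of the periodic Riemann solution but a separate explicit computation. Second, the ``two limit ends'' are not encoded by two asymptotic parameters of the balance system; they are a topological consequence of having infinitely many planar ends accumulating at both $x_3\to\pm\infty$.
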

W. Meeks, J. Perez and A. Ros \cite{MPR} have proven 
that a properly embedded
minimal surface of finite genus and infinitely many ends, must have
planar ends and two limit ends.
(This later point means, in the finite genus case, that it is 
homeomorphic to a compact closed surface $M$ punctured in 
a countable set with precisely two limit points.)
Such surfaces have been constructed  by
L. Hauswirth and F. Pacard \cite{HP} using an analytic gluing
procedure.
Note however that the surfaces we construct are different: their 
examples
degenerate into Costa-Hoffman-Meeks surfaces whereas our 
examples degenerate into catenoids.

\medskip

We also obtain examples of infinite genus. In particular we can construct
quasi-periodic examples.
\begin{theorem}
\label{th2}
There exists complete, properly embedded minimal surfaces in euclidean
space which are quasi-periodic and non-periodic. These surfaces
have infinite genus, infinitely many planar ends and two 
limit ends.
\end{theorem}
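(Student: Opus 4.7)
The plan is to lift the construction behind Theorem~\ref{th1} from finite to infinite genus by allowing a bi-infinite family of handles, and to select the handle parameters so that the resulting surface is Bohr almost-periodic along the end axis but not periodic. I would reformulate the opening-nodes construction so that the unknown data is a bi-infinite sequence $\tt=(t_n)_{n\in\Z}$ of small neck parameters, one per potential handle inserted between consecutive Riemann/catenoidal pieces of an infinite chain. In the finite-genus case of Theorem~\ref{th1} only finitely many $t_n$ are nonzero; here I want to allow the full sequence to be admissible. The corresponding approximate surface is obtained by opening nodes on a noncompact nodal Riemann surface and inserting a neck of size $|t_n|$ at each node, aligned so that the two limit ends of the underlying chain persist.

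The next step is to solve the balancing and period equations in a Banach space of sequences, naturally $\linf(\Z)$ (possibly with a weight reflecting the exponential decay of interactions between distant necks). Because each handle interacts mainly with its two neighbors, the linearization of the balancing/period map at $\tt=0$ should be essentially tridiagonal with diagonally dominant entries, and should be invertible on $\linf(\Z)$ with operator-norm bound independent of any finite truncation. Given this, an implicit-function / contraction-mapping argument then produces a continuous map $\tt\mapsto M(\tt)$ defined on a small $\linf$-neighborhood of $0$, which specializes to the finite-genus surfaces of Theorem~\ref{th1} when only finitely many coordinates are nonzero. Establishing this uniform invertibility and the matching uniform nonlinear estimates is the main obstacle: the analysis in the proof of Theorem~\ref{th1} provides the local model, but the infinite-genus version needs these estimates to be uniform in the number of handles, which should follow from the short-range, exponentially decaying character of the coupling via a Neumann-series / Schauder argument.

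Finally, to exhibit quasi-periodic but non-periodic examples, I would take $t_n = \tau\, f(n\a)$ with $\a$ irrational, $f\colon\R/\Z\to\R$ continuous and non-constant, and $\tau>0$ small enough for the contraction argument to apply. The sequence $(t_n)_{n\in\Z}$ is then Bohr almost-periodic but not periodic; by continuity of $\tt\mapsto M(\tt)$ and the equivariance of the construction under index translation, the surface $M(\tt)$ is quasi-periodic along the axis of the ends, and non-periodicity follows from the injectivity of the parametrization together with the non-periodicity of $(f(n\a))_{n\in\Z}$. Infinite genus is immediate from the infinitely many nonzero $t_n$, each contributing a handle, and the two-limit-end structure is inherited from the underlying bi-infinite chain of Riemann pieces, completing the picture required by Theorem~\ref{th2}.
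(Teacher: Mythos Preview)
Your sketch has the right large-scale shape (work in $\ell^{\infty}(\Z)$, solve the period/balancing equations by an implicit function theorem, then choose the free data to be quasi-periodic but not periodic), but it diverges from the paper in a way that introduces a genuine gap.

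The central issue is your invertibility claim. You assert that the linearized balancing map is ``essentially tridiagonal with diagonally dominant entries'' and hence invertible on $\ell^{\infty}$. In fact the paper shows exactly the opposite for the model case: for the basic Riemann configuration the linearized force operator is $(u_k)_{k\in\Z}\mapsto(2u_k-u_{k-1}-u_{k+1})_{k\in\Z}$, which is neither injective nor surjective on $\ell^{\infty}(\Z)$. Diagonal dominance fails because of the translational degree of freedom built into the forces. The paper fixes this not by a Neumann-series argument but by a change of variables: it replaces the point positions $p_{k,i}$ by the increments $\ell_k=p_{k,1}-p_{k-1,1}$ and relative positions $u_{k,i}$, and simultaneously replaces the sums $\sum_i F_{k,i}$ by the quantities $G_k$, obtaining a map $\mathbf{U}\mapsto\tilde{\mathbf{F}}$ whose differential is block-diagonal and genuinely invertible. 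Without something equivalent to this reformulation your contraction argument does not get off the ground.

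There is also a structural mismatch in how genus and quasi-periodicity are produced. You propose to vary a continuous neck parameter $t_n$ from level to level and take $t_n=\tau f(n\alpha)$; but in the paper all necks are opened with the \emph{same} parameter $t$, and the topology (hence the genus) is encoded instead in the integer sequence $(n_k)_{k\in\Z}$ giving the number of necks at level $k$. Quasi-periodicity is then obtained by choosing $(n_k)$ quasi-periodic and non-periodic, and the balanced non-degenerate configuration is built by concatenating finitely many compatible finite blocks (Proposition~\ref{combination} together with Example~\ref{example3}). This discrete, finitely-valued setup is what makes the uniform-in-$k$ estimates tractable (the paper's ``finiteness hypothesis''); your continuous-parameter scheme would forfeit that simplification and would also require you to justify that index-shifting $\tt$ corresponds to a Euclidean translation of $M(\tt)$ and that $\tt\mapsto M(\tt)$ is injective, neither of which you address.
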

Recall that a minimal surface $M$ is periodic if there exists a non-zero
translation $T$ such that $T(M)=M$.
We say that a minimal surface $M$ is quasi-periodic if
there exists a diverging sequence of translations $(T_n)_{n\in\N}$
such that the sequence
$(T_n(M))_{n\in\N}$ converges smoothly to $M$ on compact subsets of
$\R^3$ \cite{MPR3}.
(This notion of quasi-periodicity is weaker than the usual notion
of quasi-periodicity in cristallography.)
An example of quasi-periodic minimal surface in the flat manifold
$\R^2\times{\mathbb S}^1$ was constructed in \cite{MT}.
 
\medskip

To describe the surfaces we construct, we introduce some terminology.
Let $M$ be a complete, properly embedded minimal surface with
an infinite number of horizontal planar ends.
\begin{definition}
We say that $M$ is of type $(n_k)_{k\in\Z}$
if there exists an increasing sequence
$(h_k)_{k\in\Z}$ such that for each $k\in\Z$,
\begin{itemize}
\item the intersection of $M$ with the horizontal plane $x_3=h_k$
is the union of $n_k$ smooth Jordan curves,
\item the domain $h_k<x_3<h_{k+1}$ of $M$ has one planar end and
is homeomorphic to a planar domain.
\end{itemize}
\end{definition}
For example, Riemann examples are of type $(1)_{k\in\Z}$.
If $M$ is of type $(n_k)_{k\in\Z}$, then its genus is
equal to $\sum_{k\in\Z} (n_k-1)$, possibly infinite.
The following theorem is a particular case of our main result, to
be stated in the next section.
\begin{theorem}
\label{th3}
Let $(n_k)_{k\in\Z}$ be a sequence of positive integers. Assume that
\begin{itemize}
\item the sequence $(n_k)_{k\in\Z}$ is bounded,
\item for all $k\in\Z$, either $n_k=1$ or $n_{k+1}=1$.
\end{itemize}
Then there exists a family of complete properly embedded minimal surfaces
with infinitely many planar ends and of type $(n_k)_{k\in\Z}$.
Moreover, these surfaces are periodic (resp. quasi-periodic)
if and only if the sequence $(n_k)_{k\in\Z}$ is periodic
(resp. quasi-periodic).
\end{theorem}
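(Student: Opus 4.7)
The plan is to deduce Theorem \ref{th3} as an instance of the general node-opening construction of \cite{T}, but implemented in an $\linf$-Banach space framework so that no periodicity assumption is needed. The starting point is a fully degenerate configuration modeled, level by level, on basic building blocks: when $n_k=1$ one inserts a Riemann-type neck (a catenoidal piece between two planar ends), while when $n_k\geq 2$ one inserts a piece with $n_k-1$ handles modeled on a chain of small catenoidal necks tied to the Riemann neck. The hypothesis that either $n_k=1$ or $n_{k+1}=1$ is crucial here: it ensures that every handle block is flanked by a simple Riemann neck, so that the set of ``interaction patterns'' between neighboring blocks is restricted; combined with boundedness of $(n_k)$, this means only finitely many local models appear up to isomorphism, which will ultimately give uniform estimates.

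Next, I would encode the unknowns as a point in a product of $\linf(\Z)$ spaces: to each level $k$ one associates a vector of parameters (the node-opening parameters, the vertical heights $h_k-h_{k-1}$, and the positions of the necks within a level), and the Weierstrass/period-closing equations for embedded minimal surfaces become a nonlinear system $F(\xx)=0$ with one equation per node, indexed by $k\in\Z$. Because every equation involves only the parameters at levels $k-1,k,k+1$, the system is tridiagonal in structure, and at the degenerate limit the linearization $DF$ decomposes as a uniformly bounded block-tridiagonal operator on $\linf$. The main step, and the hardest one, is to show that $DF$ is invertible on $\linf$ with uniformly bounded inverse. In the periodic case one can diagonalize by Floquet/Fourier, but here I would instead exploit the Riemann-neck separators: treating each handle block together with its two neighboring Riemann necks as a finite-dimensional subproblem for which the balancing equations from \cite{T} are known to be nondegenerate, one writes $DF$ as a block-diagonal part plus an off-diagonal coupling which is small due to the exponentially decaying horn-potential interactions between distant necks. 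A Neumann series argument then yields uniform invertibility, and the implicit function theorem provides a $1$-parameter family of minimal surfaces of the prescribed type $(n_k)_{k\in\Z}$.

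Embeddedness and properness follow by comparison with the already-embedded building blocks on each compact slab, using a maximum-principle/barrier argument and the fact that the heights $h_{k+1}-h_k$ stay uniformly bounded below. For the periodicity statement, suppose $(n_k)$ is periodic of period $p$; then a vertical translation acts on the parameter $\xx$ by a shift $S_p$, and $S_p(\xx)$ is again a solution of $F=0$ of the same type. The local uniqueness given by the implicit function theorem forces $S_p(\xx)=\xx$, so the corresponding minimal surface is periodic. Conversely, periodicity of the surface obviously forces periodicity of $(n_k)$. The quasi-periodic case is handled in the same way: if $(n_{k+p_n})$ converges to $(n_k)$ coordinatewise, then by uniform continuity of the solution map $\xx$ is close to $S_{p_n}\xx$ in $\linf$, which translates into smooth convergence of translates of the surface on compact sets.
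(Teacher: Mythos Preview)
Your outline captures the right architecture (implicit function theorem in $\ell^{\infty}$, finitely many local models thanks to boundedness of $(n_k)$, handle blocks separated by Riemann necks), but there is a genuine gap in the invertibility step, and it is precisely the point the paper isolates.

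You assert that $DF$ is block-diagonal plus an off-diagonal part that is ``small due to the exponentially decaying horn-potential interactions between distant necks'', and then invoke a Neumann series. This is not correct for the balancing/period equations at the degenerate limit. The force $F_{k,i}$ exerted on a neck at level $k$ involves the necks at levels $k\pm 1$ through terms of size $|p_{k,i}-p_{k\pm 1,j}|^{-1}$, and these distances are uniformly bounded above, so the coupling between adjacent blocks is of order one, not small. The paper makes this explicit: already for the pure Riemann configuration ($n_k\equiv 1$) the linearized force map is the discrete Laplacian $(u_k)\mapsto(2u_k-u_{k-1}-u_{k+1})$, which is neither injective nor surjective on $\ell^{\infty}(\Z)$. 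No Neumann-series perturbation of a block-diagonal operator can repair this, because the diagonal block itself is the problem.

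The missing idea is an algebraic change of variables, not an analytic estimate. One replaces the neck positions $(p_{k,i})$ by the relative data $\ell_k=p_{k,1}-p_{k-1,1}$ and $u_{k,i}=p_{k,i}-p_{k,1}$, and replaces the forces $(F_{k,i})$ by $(G_k,F_{k,2},\dots,F_{k,n_k})$ where $G_k=\sum_{i,j} c_kc_{k-1}/(p_{k,i}-p_{k-1,j})$. The point is that $\sum_i F_{k,i}=G_{k+1}-G_k$, so balancing is equivalent to $G_k$ being constant and $F_{k,i}=0$ for $i\geq 2$. In the paper, Theorem~\ref{th3} is obtained by concatenating finite $(1,n,1)$ blocks (Example~\ref{example3}), rescaled to have equal residual force; after the change of variables the map $\mathbf U\mapsto\tilde{\mathbf F}$ is \emph{exactly} block-diagonal (each $\tilde{\mathbf F}^{(m)}$ depends only on $\mathbf U^{(m)}$), so invertibility on $\ell^{\infty}$ reduces to the finite-dimensional non-degeneracy of each $(1,n,1)$ block, with a uniform bound because only finitely many $n$ occur. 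This feeds into the general Theorem~\ref{th4}, which handles the analytic side (opening infinitely many nodes, period problem, embeddedness). Your periodicity/quasi-periodicity argument via uniqueness in the implicit function theorem is fine and matches the paper's.
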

We say that a sequence $(u_k)_{k\in\Z}$
is periodic if there exists a positive integer
$T$ such that $u_{k+T}=u_k$.
We say it is quasi-periodic if there 
exists a diverging sequence of integers $(T_n)_{n\in\N}$ such that
for all $k\in\Z$, $\disp\lim_{n\to\infty}u_{k+T_n}=u_k$.
(In the case of a sequence of integer numbers, this means that
for all $k\in\Z$, there exists an integer $N$ such that
for all $n\geq N$, $u_{k+T_n}=u_k$. In other words, any
finite portion of the sequence is repeated infinitely many times.)
It is clear that theorems \ref{th1} and \ref{th2} follow from this
theorem by choosing appropriate sequences $(n_k)_{k\in\Z}$.

\medskip

Heuristically, our surfaces are constructed by taking an infinite
stack of horizontal planes $(P_k)_{k\in\Z}$, ordered by their
height, and gluing $n_k$ catenoidal necks between the planes $P_k$
and $P_{k+1}$ for each $k\in\Z$.
It is clear that in this way, we obtain a surface of type
$(n_k)_{k\in\Z}$.

\medskip

The construction follows the lines of the one in the 
periodic case in \cite{T}.
In the periodic case, we worked in the quotient by the period, so
we only had to glue a finite number of catenoids and we could use
the classical theory of compact Riemann surfaces.
In the non-periodic case that we consider in this paper, we have to glue
infinitely many catenoids at the same time, and the underlying Riemann
surface is not compact.
This paper is part of a project where we develop tools to glue
infinitely many minimal surfaces together.
Technically, it relies on the theory of {\em opening
infinitely many nodes} developped by the second author in \cite{TT}.
\section{Configurations and forces, main result}
\label{section-configurations}
Let $(n_k)_{k\in\Z}$ be a sequence of positive integers.
A configuration of type $(n_k)_{k\in\Z}$ is a sequence
of complex numbers $(p_{k,i})_{1\leq i\leq n_k,k\in\Z}$.
The points $p_{k,i}$, $1\leq i\leq n_k$,
represent the position of the catenoidal necks
that we will create between the planes $P_k$ and $P_{k+1}$.
These points must satisfy a balancing condition which we
express in term of forces. Let $c_k=\frac{1}{n_k}$.
\begin{definition}
The force $F_{k,i}$ exerted on $p_{k,i}$ 
by the other points of the configuration
is defined as
\begin{equation}
\label{forces}
F_{k,i}=2\sum_{j=1\atop j \neq i}^{n_k}
 \frac{c_k^2}{p_{k,i}-p_{k,j}}-
 \sum_{j=1}^{n_{k+1}} \frac{c_k c_{k+1}}{p_{k,i}-p_{k+1,j}}
 - \sum_{j=1}^{n_{k-1}} \frac{c_k c_{k-1}}{p_{k,i}-p_{k-1,j}}.
\end{equation}
A configuration $\{p_{k,i}\}_{k \in \Z,i\in\{1,\ldots,n_k\}}$
 is said to be balanced if all forces $F_{k,i}$
vanish.
\end{definition} 
For the forces to be defined, we need that for each $k\in\Z$, the points
$p_{k,i}$, $1\leq i\leq n_k$ and $p_{k\pm 1,i}$, $1\leq i\leq n_{k\pm 1}$
are distinct, which we assume from now on.
We will see later the existing relationship between
the balancing condition and the period problem that we have to solve
to construct our family of minimal surfaces.

\begin{example}
\label{example1}
\em
Fix some non-zero complex number $a$.
The configuration
given by $n_k=1$ and $p_{k,1}=ka$ for all $k\in\Z$ is balanced.
This configuration yields the family of Riemann examples.
\end{example}
Moreover, as our construction is based on the implicit function theorem,
we need the differential of the force map to be invertible in some
sense.
In the case of example \ref{example1},
the differential of $F_{k,1}$ is given by
$$dF_{k,1}=\frac{1}{a^2}(2dp_{k,1}-dp_{k-1,1}-dp_{k+1,1}).$$
The operator $(u_k)_{k\in\Z}\mapsto (2 u_k-u_{k-1}-u_{k+1})_{k\in\Z}$
is neither injective nor surjective from $\ell^{\infty}(\Z)$
to $\ell^{\infty}(\Z)$, where $\ell^{\infty}(\Z)$ is the space
of bounded sequences $(u_k)_{k\in\Z}$ with the sup norm.
This motivates the following change of variables.
\begin{equation}
\label{change}
\left\{
\begin{array}{l}
u_{k,i}=p_{k,i}-p_{k,1},\quad 1\leq i\leq n_k,\quad k\in\Z\\
\ell_k=p_{k,1}-p_{k-1,1},\quad k\in\Z.\\
\end{array}
\right.
\end{equation}
By definition we have $u_{k,1}=0$.
We denote by $\bf U $ the sequence
\begin{equation}
\label{u}
\ldots,\ell_k,u_{k,2},\ldots,u_{k,n_k},
\ell_{k+1},u_{k+1,2},\ldots, u_{k+1,n_{k+1}},\ell_{k+2},\ldots
\end{equation}
The parameter $\bf U$ determines the configuration up to
a translation, which is irrelevant since the forces are invariant by
translation of the configuration.

The expression of the forces in terms of the new variables
is
$$F_{k,i}=2\sum_{j=1\atop j \neq i}^{n_k}
 \frac{c_k^2}{u_{k,i}-u_{k,j}}-
 \sum_{j=1}^{n_{k+1}} \frac {c_k c_{k+1}}
 {u_{k,i}-\ell_{k+1}-u_{k+1,j}}
 - \sum_{j=1}^{n_{k-1}} \frac{c_k  c_{k-1}}
 {u_{k,i}+\ell_{k}-u_{k-1,j}}.
$$
We define 
$$
G_{k}=
\sum_{i=1}^{n_{k}}
\sum_{j=1}^{n_{k-1}} 
\frac{c_k c_{k-1}}{p_{k,i}-p_{k-1,j}}=
\sum_{i=1}^{n_{k}}
\sum_{j=1}^{n_{k-1}} 
\frac{c_k c_{k-1}}{u_{k,i}+\ell_{k}-u_{k-1,j}}.
$$
An elementary computation gives
$$\sum_{i=1}^{n_k} F_{k,i}=G_{k+1}-G_k.$$
Therefore, the configuration is balanced if and only if
for all $k\in\Z$, we have
$F_{k,i}=0$ for $2\leq i\leq n_k$ and
$G_k=G_0$.
We denote by $\tilde{\bf F}$ the sequence 
\begin{equation}
\label{Gk}
\ldots,G_k,F_{k,2},\ldots,F_{k,n_k},
G_{k+1},F_{k+1,2},\ldots, F_{k+1,n_{k+1}},G_{k+2},\ldots 
\end{equation}
The configuration is balanced if and only if $\tilde{\bf F}=0$.
\begin{definition}
\label{def.iso}
We say that the configuration is non-degenerate if the differential
of $\tilde{\bf F}$ with respect to ${\bf U}$,
from $\ell^{\infty}(\Z)$ to itself, exists and
is an isomorphism.
\end{definition}
Let us return to the configuration of example \ref{example1} and
see that it is non-degenerate.
The configuration is given by $\ell_k=a$.
We have ${\bf U}=(\ell_k)_{k\in\Z}$ and
$\tilde{\bf F}=(G_k)_{k\in\Z}=(\frac{1}{\ell_k})_{k\in\Z}$. 
The map ${\bf U}\mapsto\tilde{\bf F}$ is differentiable with
differential equal to $\frac{-1}{a^2}id$, so the configuration is
non-degenerate.

\medskip

We are ready to state the main result of this paper.
\begin{theorem}
\label{th4}
Consider a balanced, non-degenerate configuration ${\bf U}$ of type
$(n_k)_{k\in\Z}$.
Further assume that
\begin{enumerate}
\item the sequence $(n_k)_{k\in\Z}$ is bounded,
\item the sequence ${\bf U}$ takes a finite number of values,
\item for all $k\in\Z$, $\disp\frac{1}{n_k}\sum_{i=1}^{n_k}p_{k,i}
\neq \disp\frac{1}{n_{k-1}}\sum_{i=1}^{n_{k-1}}p_{k-1,i}$.
\end{enumerate}
Then there exists a 1-parameter family $(M_t)_{0<t<\varepsilon}$
of complete, properly embedded
minimal surfaces with infinitely many planar ends, of type $(n_k)_{k\in\Z}$.
Furthermore, each surface $M_t$ is periodic (resp. quasi-periodic)
if and only if the configuration is periodic (resp. quasi-periodic).
\end{theorem}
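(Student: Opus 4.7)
The plan is to realize each $M_t$ as the image under the Weierstrass representation of a Riemann surface $\Sigma_t$ obtained by \emph{opening infinitely many nodes}, in the sense of \cite{TT}, of a singular surface $\Sigma_0=\bigsqcup_{k\in\Z}\Sigma_k$. Each $\Sigma_k$ is a copy of the Riemann sphere $\CC$ in which the points $p_{k,i}$, $1\leq i\leq n_k$, and $p_{k-1,j}$, $1\leq j\leq n_{k-1}$, are marked, together with one extra point playing the role of the planar end at height between $h_k$ and $h_{k+1}$; for each $(k,i)$ the point $p_{k,i}$ on $\Sigma_k$ is identified with its counterpart on $\Sigma_{k+1}$, producing a node which will become the $(k,i)$-th catenoidal neck between the planes $P_k$ and $P_{k+1}$. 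A single real parameter $t$ controls the size of all necks simultaneously, with the local opening scale at the $(k,i)$-th node being a fixed multiple of $t$ governed by the weight $c_k=1/n_k$. On $\Sigma_t$ I would prescribe Weierstrass data $(g_t,dh_t)$: a height differential $dh_t$ with double poles (no residue) at the marked points representing horizontal planar ends and with prescribed residues on the neck cycles matching the fluxes, together with a Gauss map $g_t$ chosen so that $g_tdh_t$ and $g_t^{-1}dh_t$ are holomorphic $1$-forms on $\Sigma_t$ with the correct vanishing at the ends. The machinery of \cite{TT} produces such differentials with $\ell^\infty$-control on their periods, regarded as functions of the configuration parameters.

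The heart of the argument is the period problem: for the Weierstrass integral to define a single-valued map into $\R^3$, the real periods of $dh_t$ and the horizontal periods of $g_tdh_t$ and $g_t^{-1}dh_t$ must vanish on every cycle of $\Sigma_t$. I would package all these (infinitely many) real conditions into a map
\[
\mathcal{P}\colon\mathcal{O}\subset\ell^\infty(\Z)\times(0,\varepsilon)\longrightarrow\ell^\infty(\Z),
\]
where $\mathcal{O}$ is a neighbourhood of $(\mathbf{U},0)$ and the target sequence is indexed exactly as $\tilde{\mathbf{F}}$ in \eqref{Gk}: one horizontal period condition per neck beyond the first at each level, and one vertical condition per level. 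A Laurent-type expansion of the periods in $t$ as nodes are opened, combined with the classical residue calculus, should show that $\mathcal{P}(\cdot,t)$ equals, to leading order in $t$, a nonzero scalar multiple of the force map $\tilde{\mathbf{F}}$ plus a higher-order remainder. The balancing hypothesis $\tilde{\mathbf{F}}(\mathbf{U})=0$ then provides the zeroth-order solution, and Definition~\ref{def.iso} provides invertibility of the linearization $d_{\mathbf{U}}\tilde{\mathbf{F}}$ as an operator on $\ell^\infty(\Z)$. The implicit function theorem in Banach spaces then produces, for each small $t>0$, a unique nearby configuration $\mathbf{U}(t)$ with $\mathcal{P}(\mathbf{U}(t),t)=0$, and the Weierstrass data built from $\mathbf{U}(t)$ defines the minimal surface $M_t$. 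Hypotheses~(1) and (2) are precisely what is needed so that the operators entering the implicit function theorem are bounded from $\ell^\infty(\Z)$ to itself uniformly in $k$; hypothesis~(3) ensures that the vertical period between consecutive levels is bounded away from zero, so that $M_t$ genuinely carries the prescribed stratification into slabs of type $(n_k)$.

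The principal technical obstacle is to carry out all of this uniformly along an infinite chain of spheres and necks: one must produce holomorphic differentials on a non-compact Riemann surface of possibly infinite genus with accumulating ends, and control their periods in $\ell^\infty$ with constants independent of $k$. This is the content of the opening-infinitely-many-nodes technology of \cite{TT}, and the novelty here is to combine those uniform estimates with the force-theoretic period expansion. Once $M_t$ is constructed, properness and embeddedness for $t$ small follow from the maximum principle and the fact that $M_t$ is $C^\infty$-close, on compact subsets, to the model configuration of horizontal planes joined by small catenoidal necks. Finally, if $(n_k)$ and the starting configuration $\mathbf{U}$ are invariant under an integer shift $k\mapsto k+T$, then both $\mathcal{P}$ and $\tilde{\mathbf{F}}$ commute with that shift, so the uniqueness clause of the implicit function theorem forces $\mathbf{U}(t)$ to be shift-invariant as well, whence $M_t$ is periodic in $x_3$; the quasi-periodic case follows analogously by applying this uniqueness principle to each approximate shift in the sequence witnessing quasi-periodicity of $\mathbf{U}$.
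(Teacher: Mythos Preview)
Your broad strategy---opening infinitely many nodes as in \cite{TT}, prescribing Weierstrass data on the resulting $\Sigma_t$, and solving the period problem by the implicit function theorem on $\ell^\infty$ with the balancing condition as zeroth order and non-degeneracy giving invertibility---is exactly the paper's. But several concrete pieces are wrong or missing, and a couple of them are not minor.

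First, the height differential $\omega$ is \emph{holomorphic} at the planar ends $\infty_k$, not with double poles there; it is the Gauss map $g$ that has a double zero or pole at $\infty_k$, so that $g^{\pm 1}\omega$ carries the pole producing the end. This matters because hypothesis~(3) is precisely what forces the order of $g_k$ at $\infty_k$ to be exactly two (see section~\ref{section-gauss-map}); it has nothing to do with vertical periods between consecutive levels being bounded away from zero.

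Second, you omit entirely the \emph{regularity} step: for $\psi$ to be an immersion, every finite zero of $g_k$ in $\C_k$ must be matched by a zero of $\omega$ of the same multiplicity. The paper devotes a separate implicit-function argument to this (Proposition~\ref{proposition-zeros}, via Lemma~\ref{lemma-zeros}), introducing auxiliary parameters $\alpha,\beta$ in the definition of $g_k$ specifically to absorb this condition. Without it the Weierstrass map is not even regular.

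Third, the paper does \emph{not} package all period conditions into one map $\mathcal{P}$. It proceeds in four sequential stages---zeros of $\omega$ (parameters $\alpha,\beta$), $B$-periods of $\omega$ (parameter $\gamma$), $B$-periods of $\phi_1,\phi_2$ (parameter $b$), and finally $A$-periods of $\phi_1,\phi_2$ (parameter $a\leftrightarrow{\bf U}$)---and only the last stage yields the force map $\tilde{\bf F}$. In particular both the $F_{k,i}$ and the $G_k$ arise from the \emph{horizontal} $A$-periods, not from any vertical condition. The staging is not cosmetic: the $B$-period of $\omega$ diverges like $2(\gamma_{k,i}-\gamma_{k,1})\log t$ (Lemma~\ref{lemma-period-omega}) and must be renormalized via $t=e^{-1/\tau^2}$, whereas the $A$-periods of $\phi_1,\phi_2$ scale like $1/t$. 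Collapsing everything into a single $\mathcal{P}$ that is ``to leading order a multiple of $\tilde{\bf F}$'' hides these incompatible scalings and would not go through as stated.

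Finally, your embeddedness sketch (maximum principle plus closeness to the model) is too soft. The actual argument (Proposition~\ref{proposition-embedded}) shows each $\psi_t(\Omega_{k,\epsilon})$ is graphical after an affine rescaling, then invokes Shiffman's theorem to foliate each neck by convex horizontal curves, and separates distinct necks at the same level by a convex-hull argument with explicit cylinder radii. Your periodicity/quasi-periodicity argument via uniqueness in the IFT is, however, correct and is what the paper implicitly uses.
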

We say that the configuration is periodic if there exists a positive integer
$T$ such that for all $k\in\Z$, $n_{k+T}=n_k$, $\ell_{k+T}=\ell_k$
and $u_{k+T,i}=u_{k,i}$ for all $2\leq i\leq n_k$.
We say the configuration is quasi-periodic if there exists a
diverging sequence of integers $(T_n)_{n\in\N}$ such that
for all $k\in\Z$, there exists an integer $N$ such that for
$n\geq N$, $n_{k+T_n}=n_k$, $\ell_{k+T_n}=\ell_k$
and $u_{k+T_n,i}=u_{k,i}$ for $2\leq i\leq n_k$.
(In other words, any finite part of the configuration is
repeated infinitely many times.)

\begin{remark}\em
\label{remark1}
Let us discuss the various hypotheses of the theorem.
\begin{itemize}
\item
We formulated the definition of non-degeneracy 
by using the $\ell^{\infty}$ norm.
It is certainly the case that for many configurations, this is not
the right norm to use: they are non-degenerate for a suitable choice
of the norms on both the domain and the target space of the differential
of the force map. We chose the $\ell^{\infty}$ norm because this is
the most natural one and there are already plenty of configurations
which are non-degenerate for this norm.
\item
It would certainly be interesting to allow for unbounded sequences
$(n_k)_{k\in\Z}$, but such configurations cannot be non-degenerate
for the $\ell^{\infty}$ norm. Maybe a result is possible using a weighted
$\ell^{\infty}$ norm, with the weight depending in some way on $n_k$.
\item Hypothesis 2 is not required in any fundamental way, but makes
the proof of the theorem so much easier. It ensures the finiteness
hypothesis \ref{hypothesis-finitude}, see section \ref{section-notations}.
For the examples of configuration that we will consider, hypothesis 2
is a consequence of hypothesis 1, see remark \ref{remark2}.
\item We do not know of any example of balanced configuration for which
hypothesis 3 fails. It ensures that the Gauss map has multiplicity 2
at the ends and makes the proof slightly simpler at one point (see section \ref{section-gauss-map}).
Theorem \ref{th4} definitely holds without this hypothesis.
\end{itemize}
\end{remark}
\section{Examples of balanced non-degenerate configurations}
\label{section-exemples}
In this section, we obtain balanced, non-degenerate
configurations by concatenation of finite configurations.
\subsection{Concatenation of finite configurations}
\medskip
Let $h$ be a positive integer and $(n_0,\cdots,n_h)$ be
a finite sequence of positive integers, such that
$n_0=n_h=1$.
A finite configuration of type $(n_0,\cdots,n_h)$ is a collection $C$
of complex numbers $(p_{k,i})_{0\leq k\leq h,1\leq i\leq n_k}$.
We call the points $p_{0,1}$ and $p_{h,1}$ respectively the
first and last point of the configuration.
We call $h$ the height of the configuration.
We call width of the configuration the quantity 
$\max\{n_0,\cdots,n_h\}.$
As in section \ref{section-configurations} we make the change of variables
$$\left\{
\begin{array}{l}
u_{k,i}=p_{k,i}-p_{k,1},\quad 1\leq i\leq n_k,\quad 0\leq k\leq h\\
\ell_k=p_{k,1}-p_{k-1,1},\quad 1\leq k\leq h.\\
\end{array}
\right.$$
The forces $F_{k,i}$ for $0\leq k\leq h$ and $1\leq i\leq n_k$
are defined as in section \ref{section-configurations}, with the convention that $n_{-1}
=n_{h+1}=0$. The quantities $G_k$ for $1\leq k\leq h$ are
defined in the same way.
\begin{definition}
\label{finite.block}
A finite configuration $C$ of type $(n_0,\cdots,n_h)$ is said to be
\begin{itemize}
\item balanced if 
$F_{k,i}=0$ for $1\leq k\leq h-1$ and $1\leq i\leq n_k$.
Note that we do not require that the forces $F_{0,1}$ and $F_{h,1}$ vanish.
We will denote by $F_C$ the value of $F_{0,1}$
and we will call it the residual
force of the configuration.
\item non-degenerate if the differential
of the map which associates to the vector
\begin{equation}
\label{input}
(\ell_1,u_{1,2},\ldots,u_{1,n_1},
\ell_{2},u_{2,2},\ldots, u_{2,n_{2}},\ell_{3},\ldots,\ell_{h})
\end{equation}
the vector
\begin{equation}
\label{output}
(G_1,F_{1,2},\ldots,F_{1,n_1},
G_{2},F_{2,2},\ldots, F_{2,n_{2}},G_{3}, \ldots,G_h)
\end{equation}
is an isomorphism.
\end{itemize}
\end{definition}
\begin{proposition}
\label{proposition-residual}
If $C$ is a finite, balanced configuration of height $h$, it holds
$$F_{h,1}=-F_{0,1}=-F_C,$$
$$(p_{h,1}-p_{0,1})F_C=\sum_{k=1}^h\frac{1}{n_k}.$$
In particular, the residual force never vanishes.
\end{proposition}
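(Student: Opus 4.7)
The proof has two natural ingredients, both arising from summing the force identities against suitable weights.

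First, I would use the identity $\sum_{i=1}^{n_k}F_{k,i}=G_{k+1}-G_k$ already established in the paper. With the convention $n_{-1}=n_{h+1}=0$, the quantities $G_0$ and $G_{h+1}$ are empty sums, so telescoping gives
\[
\sum_{k=0}^{h}\sum_{i=1}^{n_k}F_{k,i}=G_{h+1}-G_0=0.
\]
For a balanced configuration only $F_{0,1}$ and $F_{h,1}$ survive on the left (since $n_0=n_h=1$), so $F_{h,1}=-F_{0,1}=-F_C$. This yields the first assertion.

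Next, I would compute $\sum_{k,i}p_{k,i}F_{k,i}$. The diagonal piece $2c_k^2\sum_{i\ne j}\frac{p_{k,i}}{p_{k,i}-p_{k,j}}$ collapses, after symmetrizing $i\leftrightarrow j$, to $c_k^2 n_k(n_k-1)=(n_k-1)/n_k$. For the cross piece, I would reindex the $p_{k-1}$ sum to pair it with the $p_{k+1}$ sum, obtaining the telescoping pair
\[
-\sum_{k=0}^{h-1}c_kc_{k+1}\sum_{i,j}\frac{p_{k,i}-p_{k+1,j}}{p_{k,i}-p_{k+1,j}}=-\sum_{k=0}^{h-1}c_kc_{k+1}\,n_kn_{k+1}=-h.
\]
Adding these contributions and using $n_0=n_h=1$ produces
\[
\sum_{k=0}^{h}\sum_{i=1}^{n_k}p_{k,i}F_{k,i}=\sum_{k=0}^{h}\frac{n_k-1}{n_k}-h=-\sum_{k=1}^{h}\frac{1}{n_k}.
\]
In the balanced case the left side reduces to $p_{0,1}F_{0,1}+p_{h,1}F_{h,1}=(p_{0,1}-p_{h,1})F_C$, giving the second identity.

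Finally, since $\sum_{k=1}^{h}\frac{1}{n_k}>0$, the identity $(p_{h,1}-p_{0,1})F_C=\sum_{k=1}^h 1/n_k$ forces $F_C\ne 0$. The only step requiring real care is the algebraic simplification of $\sum_{k,i}p_{k,i}F_{k,i}$; the main obstacle is keeping track of the boundary conventions ($n_{-1}=n_{h+1}=0$) so that the telescoping and symmetrization arguments apply cleanly at $k=0$ and $k=h$.
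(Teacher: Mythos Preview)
Your proof is correct and follows exactly the same strategy as the paper: establish the two identities $\sum_{k,i}F_{k,i}=0$ and $\sum_{k,i}p_{k,i}F_{k,i}=\sum_k n_k(n_k-1)c_k^2-\sum_k n_kn_{k+1}c_kc_{k+1}=1-\sum_{k=0}^h 1/n_k$, then specialize to the balanced case. The paper simply states these two formulae and says ``We omit the proof of these formulae''; you have supplied the omitted computation (the symmetrization for the diagonal term and the reindex-and-pair argument for the cross term), and your final value $-\sum_{k=1}^h 1/n_k$ agrees with the paper's $1-\sum_{k=0}^h 1/n_k$ once one uses $n_0=1$.
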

\begin{proof}
The proposition comes from the following two formulae, which hold
for any configuration of height $h$ (not necessarily balanced).
$$\sum_{k=0}^{h}\sum_{i=1}^{n_k} F_{k,i}=0$$
$$\sum_{k=0}^h\sum_{i=1}^{n_k} p_{k,i}F_{k,i}
=\sum_{k=0}^h n_k (n_k-1) c_k^2 - \sum_{k=0}^{h-1} n_k n_{k+1}
c_k c_{k+1}
=1-\sum_{k=0}^{h}\frac{1}{n_k}.$$
We omit the proof of these formulae.
\end{proof}
\begin{remark} \em
\label{remark2}
Let us fix the height $h$ and the type of the configuration,
namely the sequence $(n_0,\cdots,n_h)$.
We observe that the balancing condition can
be written as a finite system of polynomial equations in the
unknowns $\ell_k$ for $1\leq k\leq h$ and $u_{k,i}$ for
$1\leq k\leq h-1$ and $2\leq i\leq n_k$. 
Each polynomial equation defines an algebraic variety in $\C^n$,
where $n$ is the number of unknowns.
The set of balanced configurations of type
$(n_0,\cdots,n_h)$ is the intersection of these varieties.
There might be components of non-zero dimension, but a
non-degenerate configuration cannot belong to such a component.
Basic results of algebraic geometry ensure that there is only
a finite number of dimension zero components (which are points),
so there is at most
a finite number of balanced, non-degenerate configurations of type
$(n_0,\cdots,n_h)$.
\end{remark}
\begin{definition}
We will say that two configurations $C_1,C_2$ of finite 
height are compatible if their residual forces
are equal, that is $F_{C_1}=F_{C_2}.$
\end{definition}
Given a sequence of finite configurations $(C_m)_{m\in\Z}$,
we define their concatenation $C$ as follows.
We denote by $h_{m}$ the height of $C_{m}$.
We write $n_k^{(m)}$, $p_{k,i}^{(m)}$, $\ell_k^{(m)}$,
$u_{k,i}^{(m)}$ and $F_{k,i}^{(m)}$
for the quantities associated to the
configuration $C_m$.
Let $(\varphi_m)_{m\in\Z}$ be the sequence defined
inductively by
$\varphi_0=0$ and $\varphi_{m+1}=\varphi_m+h_m$ for $m\in\Z$.
We define the sequence $(n_k)_{k\in\Z}$ by
$$n_{\varphi_m+k}=n_k^{(m)}
\quad\mbox{for $m\in\Z$, $0\leq k\leq h_m$}$$
which makes sense because $n_{h_m}^{(m)}=1=n_{0}^{(m+1)}$.
We define the configuration $C$, of type $(n_k)_{k\in\Z}$, by
$$\left\{\begin{array}{l}
\ell_{\varphi_m+k}=\ell_k^{(m)} \quad\mbox{for $m\in\Z$, $1\leq k\leq h_m$},\\
u_{\varphi_m+k,i}=u_{k,i}^{(m)}\quad\mbox{for $m\in\Z$,
$1\leq k\leq h_m-1$, $2\leq i\leq n_k^{(m)}$}. 
\end{array}\right.$$
This amounts to translate the configurations so that for each $m\in\Z$,
the last point of $C_m$ coincides with the first point of $C_{m+1}$,
and identify these two points.
The following result is a generalization of 
Proposition 2.3 in \cite{T}.
\begin{proposition}
\label{combination}
Let $(C_m)_{m \in \Z}$ be a sequence of finite configurations.
Let $C$ be the configuration obtained by concatenation of these 
configurations, as explained above. Then:
\begin{itemize}
\item if all configurations $C_m$ are balanced and compatible, then
the configuration $C$ is balanced,
\item if moreover, all configurations $C_m$ are non-degenerate and
have height and width bounded by some number independant of $m$,
then the configuration $C$ is non-degenerate and
the sequence ${\bf U}$ defined by equation
\eqref{u} takes a finite number of values.
\end{itemize}
\end{proposition}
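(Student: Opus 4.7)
The plan is to prove each of the two bullets in turn, exploiting the fact that the junction levels between consecutive blocks carry only one point (since $n_0^{(m)}=n_{h_m}^{(m)}=1$).

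For the first bullet (balancing of $C$), I would check each force individually. If $k=\varphi_m+k'$ with $1\le k'\le h_m-1$ is strictly inside a block, the points entering $F_{k,i}$ in $C$ are exactly those entering $F_{k',i}^{(m)}$ in $C_m$, which vanishes by assumption. At a junction level $k=\varphi_{m+1}$ only one point exists, and reading \eqref{forces} with the conventions $n_{h_m+1}^{(m)}=n_{-1}^{(m+1)}=0$ gives
\[
F_{\varphi_{m+1},1} = F_{h_m,1}^{(m)} + F_{0,1}^{(m+1)};
\]
by Proposition \ref{proposition-residual} this equals $-F_{C_m}+F_{C_{m+1}}$, which vanishes by compatibility.

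For the finiteness of the values of $U$, let $H$ and $W$ be uniform upper bounds on the heights and widths of the $C_m$. The types $(n_0^{(m)},\ldots,n_{h_m}^{(m)})$ then range over a finite set, and Remark \ref{remark2} guarantees that for each such type only finitely many balanced, non-degenerate, finite configurations exist up to translation. Since the $(\ell_k^{(m)},u_{k,i}^{(m)})$-coordinates are translation invariant, each $C_m$ provides them from a finite set, and $U$ takes only finitely many values.

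For the non-degeneracy of $C$, I would exhibit a block-diagonal structure of the differential $d\tilde{\bf F}/d{\bf U}$ on $\ell^{\infty}(\Z)$. From \eqref{forces} and the definition of $G_k$ one sees that each output depends only on the $u$-variables at levels $k-1,k,k+1$ and on $\ell_k,\ell_{k+1}$; since junction levels carry no $u$-variables, the outputs of block $m$ (namely $G_{\varphi_m+k}$ for $1\le k\le h_m$ and $F_{\varphi_m+k,i}$ for $1\le k\le h_m-1$, $2\le i\le n_k^{(m)}$) depend only on the inputs of block $m$, and the restricted block coincides with the finite-configuration force differential of $C_m$ as in \eqref{input}--\eqref{output}, invertible by assumption. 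The global differential is therefore a direct sum of finite-dimensional isomorphisms of dimension at most $HW$; by the previous paragraph these blocks and their inverses are drawn from a finite set of matrices, so their norms are uniformly bounded, and $d\tilde{\bf F}/d{\bf U}$ is a bounded bijection on $\ell^{\infty}(\Z)$ with bounded inverse. The main obstacle here is the careful bookkeeping to confirm that single-point junctions really prevent all coupling between blocks and that the restricted operator in each block matches the finite-configuration differential of $C_m$ without any boundary modification; once this is in place, uniform invertibility via Remark \ref{remark2} is almost automatic.
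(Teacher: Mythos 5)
Your proposal is correct and follows essentially the same strategy as the paper: exploit the single-point junctions $n^{(m)}_{h_m}=n^{(m+1)}_0=1$ to get a block decomposition of ${\bf U}$ and $\tilde{\bf F}$, then invoke Remark \ref{remark2} for finiteness of the set of blocks and for uniform bounds on the block inverses. Your treatment of the first bullet is a little more explicit than the paper's (you verify $F_{\varphi_{m+1},1}=F^{(m)}_{h_m,1}+F^{(m+1)}_{0,1}=0$ directly via Proposition \ref{proposition-residual}, whereas the paper works through the $G_k$ reformulation), but this is the same computation viewed two ways. The one point you pass over is that before inverting the block-diagonal operator you should establish that ${\bf U}\mapsto\tilde{\bf F}$ is in fact Fr\'echet differentiable on $\ell^\infty$; the paper does this by observing that the finitely many block maps have uniformly bounded \emph{second}-order differentials as well as first-order ones, so the blockwise derivative really is the $\ell^\infty$ derivative. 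Worth adding a sentence to that effect, but the overall argument is sound.
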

\begin{proof}

\medskip

Let us write, for $m\in\Z$,
$${\bf U}^{(m)}=(\ell_1^{(m)},u_{1,2}^{(m)},\ldots,u_{1,n_1}^{(m)},
\ell_{2}^{(m)},u_{2,2}^{(m)},\ldots, u_{2,n_{2}}^{(m)},\ell_{3}^{(m)},\ldots,\ell_{h}^{(m)}),
$$
$$\tilde{\bf F}^{(m)}=
(G_1^{(m)},F_{1,2}^{(m)},\ldots,F_{1,n_1}^{(m)},
G_{2}^{(m)},F_{2,2}^{(m)},\ldots, F_{2,n_{2}}^{(m)},G_{3}^{(m)}, \ldots,G_h^{(m)})$$
for the parameters and forces corresponding to the configuration
$C_m$.
Then we have, for the configuration $C$,
${\bf U}=({\bf U}^{(m)})_{m\in\Z}$ by definition of the concatenation and
$\tilde{\bf F}=(\tilde{\bf F}^{(m)})_{m\in\Z}$ by inspection.
If all configurations $C_m$ are balanced and compatible, then
all $G_k$ are equal and the configuration $C$ is balanced.

\medskip

Let us assume that the configurations $C_m$ are non-degenerate and
have bounded height and width.
Then there is only a finite number of possibilities
for the types of the configurations. 
By remark \ref{remark2},
there is only a finite number of configurations $C_m$ for $m\in\Z$
(some of them are repeated infinitely many times in the sequence
$(C_m)_{m\in\Z}$).
Hence the sequence ${\bf U}$ takes only a finite number of values.

\medskip

To prove that $\tilde{\bf F}$ is differentiable with respect to
${\bf U}$, we observe that for each $m\in\Z$,
$\tilde{\bf F}^{(m)}$ only depends on ${\bf U}^{(m)}$.
Since there is only a finite number of distinct configurations,
the differential of $\tilde{\bf F}^{(m)}$ with respect to
${\bf U}^{(m)}$ has norm bounded by some number independent of $m$,
and the same is true for the second order differential. 
This implies
that $\tilde{\bf F}$ is differentiable with respect to
${\bf U}$ from $\ell^{\infty}$ to itself,
with differential given by
$$d\tilde{\bf F}({\bf U})({\bf X})
=\left(d\tilde{\bf F}^{(m)}({\bf U}^{(m)})({\bf X}^{(m)})\right)_{m\in\Z}.$$
(In other words, the differential has a block diagonal structure.)
Again, since there is only a finite number of distinct configurations,
the norms of the inverses of $d\tilde{\bf F}^{(m)}({\bf U}^{(m)})$ are bounded by
some number independent of $m$, so
the operator
$${\bf X}\mapsto\left(d\tilde{\bf F}^{(m)}({\bf U}^{(m)})^{-1}({\bf X}^{(m)})\right)_{m\in\Z}$$
is bounded from $\ell^{\infty}$ to itself, so
$d\tilde{\bf F}({\bf U})$ is invertible and the configuration
$C$ is non-degenerate.
\end{proof}
\subsection{Examples of finite configurations}

\begin{example}
\label{example2}
\em
A trivial example:
fix some non-zero complex number $a$ and some positive
integer number $h$.
The configuration of height $h$ and
type $(1,1,\cdots,1)$ defined by
$p_{k,1}=ka$ for $0\leq k\leq h$
is balanced, non-degenerate and has residual force
equal to $\frac{1}{a}$.
\end{example}
\begin{example}
\label{example3}
\em
A nice family of configurations of height $2$ and
type $(1,n,1)$, where $n\in\N^*$,
which comes from \cite{T}. It is given by
$$p_{0,1}=0,\quad p_{2,1}=2{\rm i},$$
$$p_{1,j}={\rm i}+\cot \frac{j\pi}{n+1},\quad \mbox{ for $1\leq j\leq n$}.$$
\end{example}
\begin{proposition}
This configuration is balanced, non-degenerate
and has residual force $\disp\frac{n+1}{2n{\rm i}}$.
\end{proposition}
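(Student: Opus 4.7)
The plan is to exploit the algebraic structure of the configuration. Setting $\zeta = e^{2\pi{\rm i}/(n+1)}$, the exponential form of cotangent yields after simplification
$$p_{1,j} = \frac{2{\rm i}\,\zeta^j}{\zeta^j-1},\qquad \text{equivalently}\qquad \frac{p_{1,j}}{p_{1,j}-2{\rm i}} = \zeta^j.$$
Hence the $p_{1,j}$, $1\leq j\leq n$, are precisely the $n$ roots of the polynomial
$$P(z) := z^{n+1}-(z-2{\rm i})^{n+1},$$
of degree $n$ with leading coefficient $2{\rm i}(n+1)$. This reformulation will drive both the balancing check and the residual force computation.

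To verify $F_{1,j}=0$ I substitute $c_0=c_2=1$, $c_1=1/n$ into \eqref{forces} and evaluate each piece in closed form. The interior sum $\sum_{k\neq j}(p_{1,j}-p_{1,k})^{-1}$ equals $P''(p_{1,j})/(2P'(p_{1,j}))$; using $P'(z)=(n+1)(z^n-(z-2{\rm i})^n)$, $P''(z)=n(n+1)(z^{n-1}-(z-2{\rm i})^{n-1})$ together with the key relation $p_{1,j}^k=\zeta^{jk}(p_{1,j}-2{\rm i})^k$, this ratio simplifies to $n(\zeta^{-j}+1)/(2(p_{1,j}-2{\rm i}))$. Substituting back into $F_{1,j}$ and using $1/(p_{1,j}-2{\rm i})=\zeta^j/p_{1,j}$ produces complete cancellation of the three terms. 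For the residual force, $F_{0,1}=\tfrac{1}{n}\sum_{j=1}^n p_{1,j}^{-1}$; since $p_{1,j}^{-1}=(1-\zeta^{-j})/(2{\rm i})$ and $\sum_{j=1}^n\zeta^{-j}=-1$, we obtain $\sum_j p_{1,j}^{-1}=(n+1)/(2{\rm i})$, hence $F_C=(n+1)/(2n{\rm i})$, as claimed.

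The main obstacle is non-degeneracy: one must show that the $(n+1)\times(n+1)$ differential of the map sending $(\ell_1,u_{1,2},\ldots,u_{1,n},\ell_2)$ to $(G_1,F_{1,2},\ldots,F_{1,n},G_2)$ is invertible at this configuration. My plan is to exploit the reflection symmetry $\sigma\colon z\mapsto 2{\rm i}-z$, which preserves $\{p_{0,1},p_{2,1}\}$, exchanges $p_{1,j}$ with $p_{1,n+1-j}$, and satisfies $F_{k,i}[\sigma(C)]=-F_{k,i}[C]$. This induces an involution on both domain and target which commutes with the differential, splitting it into two blocks on the symmetric and antisymmetric eigenspaces, each with explicit entries rational in the $\zeta^j$. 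A direct determinantal computation on each block, presumably reducible to a Cauchy-matrix style identity and modeled on the analogous computation in \cite{T} for the periodic setting, should then establish invertibility. I expect this determinantal step to be the main technical hurdle.
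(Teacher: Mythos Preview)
Your balancing and residual-force computations are correct and more explicit than the paper's, which simply cites \cite{T} for balancing and reads off the residual force from Proposition~\ref{proposition-residual}: since $(p_{2,1}-p_{0,1})F_C=\sum_{k=1}^{2}1/n_k=1+1/n$ and $p_{2,1}-p_{0,1}=2{\rm i}$, one gets $F_C=(n+1)/(2n{\rm i})$ immediately, without summing $\sum_j p_{1,j}^{-1}$.

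The non-degeneracy argument, however, has a genuine gap: you outline a symmetry-plus-determinant strategy but never carry out the determinantal step, and you yourself flag it as the main hurdle. The paper avoids this computation entirely by a different and much cleaner route. Given $X$ in the kernel of the differential, one takes a path ${\bf U}(t)$ with ${\bf U}'(0)=X$, extends the height-$2$ configuration periodically with period $T(t)=\ell_1(t)+\ell_2(t)$, and rescales by $1/T(t)$ so the period is fixed. The derivatives at $t=0$ of the periodic forces all vanish; by the \emph{periodic} non-degeneracy already proved in \cite{T}, the rescaled points satisfy $\widehat{p}_{k,i}'(0)=0$, hence $p_{k,i}'(0)=\lambda\,p_{k,i}(0)$ with $\lambda=T'(0)/T(0)$. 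Differentiating $G_2$ then gives $G_2'(0)=-\lambda G_2(0)$; since $G_2'(0)=0$ and $G_2(0)=F_C\neq 0$, one concludes $\lambda=0$ and $X=0$. This periodic-extension-plus-scaling trick is the key idea you are missing: it transfers non-degeneracy from the periodic setting to the finite one with no new matrix computation. Your symmetry-block approach might be made to work, but it would redo from scratch work that \cite{T} has already absorbed, and as it stands your proposal does not establish invertibility.
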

\begin{proof}
This configuration is proven to be balanced in \cite{T}, proposition
2.1. Let us prove that it is non-degenerate. It is enough
to prove that the differential is injective.
Let $X$ be an element in its kernel. Consider a path
${\bf U}(t)=(\ell_1(t),u_{1,2}(t),\cdots,u_{1,n}(t),\ell_2(t))$
in the parameter space, such that ${\bf U}(0)$ is the given
configuration, and ${\bf U}'(0)=X$. Then $G_1'(0)$, $G_2'(0)$
and $F_{1,i}'(0)$, $2\leq i\leq n$ all vanish because $X$
is in the kernel.
The points of the configuration at time $t$ are given by
$p_{0,1}(t)=0$,
$p_{1,i}(t)=\ell_1(t)+u_{1,i}(t)$ for $1\leq i\leq n$
and $p_{2,1}(t)=\ell_1(t)+\ell_2(t)$ (with $u_{1,1}(t)=0$).
We extend this configuration into a periodic configuration, denoted
$\widetilde{p}_{k,i}(t)$ of
period $T(t)=\ell_1(t)+\ell_2(t)$ by writing
$\widetilde{p}_{2k,1}(t)=kT(t)$ and
$\widetilde{p}_{2k+1,i}(t)=p_{1,i}(t)+kT(t)$ for $k\in\Z$.
Let us write $\widetilde{F}_{k,i}(t)$ for the forces of this
configuration, we have 
$$\widetilde{F}_{2k,1}(t)=F_{0,1}(t)+F_{2,1}(t)=G_1(t)-G_2(t),$$
$$\widetilde{F}_{2k+1,i}(t)=F_{2k+1,i}(t).$$
Hence, the derivatives of these forces at time $0$ all vanish.
Next, we scale this configuration by $1/T(t)$ so that its period is
constant by writing $\widehat{p}_{k,i}(t)=\frac{\widetilde{p}_{k,i}(t)}{T(t)}$.
If we write $\widehat{F}_{k,i}(t)$ for the forces of this configuration,
we have $\widehat{F}_{k,i}'(0)=0$. Since this periodic configuration is non-degenerate by proposition 2.1 in \cite{T},
in the sense given just after theorem 1.4 in the same paper,
we have $\widehat{p}_{k,i}'(0)=0$ for all $(k,i)$.
From this we get, for $0\leq k\leq 2$
$$p_{k,i}'(0)=T'(0)\widehat{p}_{k,i}(0)=\lambda p_{k,i}(0)
\quad\mbox{ with $\lambda=\frac{T'(0)}{T(0)}$.}$$
Then we write
$$G_2(t)=\frac{1}{n}\sum_{j=1}^n\frac{1}{p_{2,1}(t)-p_{1,j}(t)},$$
$$G_2'(0)=\frac{1}{n}\sum_{j=1}^n \frac{-1}{(p_{2,1}-p_{1,j})^2}
(\lambda p_{2,1}-\lambda p_{1,j})=-\lambda G_2(0).$$
Since $G'_2(0)=0$ and
$G_2(0)=F_C\neq 0$, this gives $\lambda=0$, so $p_{k,i}'(0)=0$
and $X=0$. This proves that the configuration is non-degenerate.
We compute the residual force using proposition \ref{proposition-residual}
\end{proof}
\begin{remark}\em We can scale these configurations by $\frac{2n}{n+1}$
so that they are compatible. Then by proposition \ref{combination},
we can concatenate them to obtain balanced
configuration whose type is any bounded sequence $(n_k)_{k\in\Z}$ such that
$n_k=1$ for even $k$.
This proves theorem \ref{th3}.
\end{remark}
\begin{example}
\label{example4}
An example of height 3 and type $(1,2,2,1)$ given by
$$p_{0,1}=0,\quad
p_{1,1}=\frac{-\sqrt2}{2}+{\rm i},\quad
p_{1,2}=\frac{\sqrt2}{2}+{\rm i},$$
$$
p_{2,1}=\frac{-\sqrt2}{2}+2{\rm i},\quad
p_{2,2}=\frac{\sqrt2}{2}+2{\rm i},\quad
p_{3,1}=3{\rm i}.$$
\end{example}
\begin{proposition}
This configuration is balanced, non-degenerate and
has residual force
$\frac{2}{3{\rm i}}.$ 
\end{proposition}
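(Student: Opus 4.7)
The plan is to follow closely the structure of the proof given for Example \ref{example3}. Three items need to be established: balancing, non-degeneracy, and the value of the residual force.

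\medskip

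\emph{Balancing.} The configuration admits two symmetries: the reflection $z\mapsto -\bar z$, which fixes $p_{0,1}$ and $p_{3,1}$ and swaps $p_{k,1}\leftrightarrow p_{k,2}$ for $k=1,2$; and the half-turn $z\mapsto 3{\rm i}-z$ about the midpoint of the stack, which exchanges $k=0\leftrightarrow k=3$ and interchanges $p_{1,1}\leftrightarrow p_{2,2}$, $p_{1,2}\leftrightarrow p_{2,1}$. The force map is equivariant under both, so the four conditions $F_{k,i}=0$ for $k,i\in\{1,2\}$ collapse to verifying a single one, say $F_{1,1}=0$. This amounts to an elementary computation of six fractional terms with the weights $c_0=c_3=1$, $c_1=c_2=\tfrac12$, which I would not grind through here.

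\medskip

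\emph{Non-degeneracy.} I would mimic the scaling argument used in the proof of Example \ref{example3}. Let $X=(\ell_1',u_{1,2}',\ell_2',u_{2,2}',\ell_3')(0)$ lie in the kernel of the differential along a path ${\bf U}(t)$ starting at the given configuration, so that $G_1'(0)=G_2'(0)=G_3'(0)=F_{1,2}'(0)=F_{2,2}'(0)=0$. Extend the configuration at time $t$ to a vertically periodic one of period $T(t)=\ell_1(t)+\ell_2(t)+\ell_3(t)$ by setting $\widetilde p_{3k,1}(t)=kT(t)$, $\widetilde p_{3k+1,i}(t)=p_{1,i}(t)+kT(t)$, $\widetilde p_{3k+2,i}(t)=p_{2,i}(t)+kT(t)$. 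The forces of this periodic extension are linear combinations of the original $F_{k,i}$ and the differences $G_j-G_{j'}$, hence have vanishing derivative at $0$. After rescaling by $1/T(t)$ to fix the period and invoking the non-degeneracy of the resulting $(1,2,2)$-periodic configuration (the analog of proposition 2.1 in \cite{T}), one obtains $\widehat p_{k,i}'(0)=0$, which yields
$$p_{k,i}'(0)=\lambda\, p_{k,i}(0)\quad\text{with}\quad\lambda=\frac{T'(0)}{T(0)}.$$
Writing $G_3(t)=\tfrac12\sum_{j=1}^2\bigl(p_{3,1}(t)-p_{2,j}(t)\bigr)^{-1}$ and differentiating gives $G_3'(0)=-\lambda G_3(0)=-\lambda F_C$. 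Since $G_3'(0)=0$ and $F_C\neq 0$ by Proposition \ref{proposition-residual}, we get $\lambda=0$, so $p_{k,i}'(0)=0$ for every $(k,i)$ and consequently $X=0$.

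\medskip

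\emph{Residual force.} Apply Proposition \ref{proposition-residual} with $h=3$: $\sum_{k=1}^3\frac{1}{n_k}=\frac12+\frac12+1=2$, and $p_{3,1}-p_{0,1}=3{\rm i}$, so $F_C=\frac{2}{3{\rm i}}$.

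\medskip

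The main obstacle is the non-degeneracy step, and specifically the assertion that the $(1,2,2)$-periodic extension is non-degenerate in the sense of \cite{T}. If this is not a direct consequence of the results of \cite{T}, one could instead verify the non-degeneracy of the $5\times 5$ differential by a direct linear-algebra computation, using the two symmetries of the first step to block-diagonalize the matrix into symmetric and antisymmetric sectors and thereby reduce to much smaller blocks.
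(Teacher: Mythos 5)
Your balancing and residual-force arguments are sound and, in the case of balancing, are actually cleaner than what the paper does: the paper simply says the verification is "purely computational" and records only the Mathematica value $4/243$ of the determinant, whereas your use of the reflection $z\mapsto -\bar z$ (sending $F\mapsto -\overline F$) and the half-turn $z\mapsto 3{\rm i}-z$ (sending $F\mapsto -F$) correctly reduces the four balance conditions to the single verification $F_{1,1}=0$, which does check out. The residual-force step via Proposition~\ref{proposition-residual} is exactly right.

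The non-degeneracy step, however, contains the gap you yourself flag, and it is a real one: Proposition~2.1 of \cite{T} establishes balancing and periodic non-degeneracy only for the specific cotangent configuration of type $(1,n)$; there is no analogous statement in \cite{T} for a periodic configuration of type $(1,2,2)$, and the present paper does not supply one. So the step "invoking the non-degeneracy of the resulting $(1,2,2)$-periodic configuration (the analog of proposition 2.1 in \cite{T})" cites a result that does not exist in the literature being referenced. The periodic-extension/scaling argument from Example~\ref{example3} is a legitimate template, but to use it here you would first have to prove non-degeneracy of the $(1,2,2)$ periodic configuration independently, which is not easier than what you are trying to avoid. The paper's own proof is exactly your proposed fallback: directly check that the $5\times5$ differential of $(G_1,F_{1,2},G_2,F_{2,2},G_3)$ with respect to $(\ell_1,u_{1,2},\ell_2,u_{2,2},\ell_3)$ is invertible (its determinant is $4/243$). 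Your suggestion to use the two symmetries to block-diagonalize that matrix into symmetric/antisymmetric sectors is a reasonable way to make this computation tractable by hand, but as written the proposal leaves the decisive step undone.
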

The proof is purely computational, we omit the details.
(The determinant of the matrix associated to the 
differential, computed with
Mathematica, equals $4/243$).

\section{Proof of the main theorem}
In this section we prove theorem \ref{th4}.
\subsection{Notations and parameters}
\label{section-notations}
There are six parameters in the construction, denoted by
$t$, $a$, $b$, $\alpha$, $\beta$ and $\gamma$.
The parameter $t$ is a positive real number.
All other five parameters are sequences
of complex numbers
of the form $u=(u_{k,i})_{k\in\Z,1\leq i\leq n_k}$.
We use the notation $u_k=(u_{k,i})_{1\leq i\leq n_k}\in\C^{n_k}$.

\medskip

The $\ell^{\infty}$ norm of the sequence
$u$ is defined as usual as
$||u||_{\infty}=\sup \{|u_{k,i}| \,:\, k\in\Z,\,1\leq i\leq n_k\}$.
Each parameter varies in a neighborhood for the $\ell^{\infty}$
norm of a central value. The central value is denoted by
an upperscript $0$, so the central value of the parameters
are denoted by
$a^0$, $b^0$, $\alpha^0$, $\beta^0$ and
$\gamma^0$.
Most of our statements only hold in a small neighborhood of the
central values.
\medskip

\medskip

The central value of the parameters will be given later.
An important point is that the
following hypothesis holds.
We say that a sequence $u=(u_{k,i})_{k\in\Z,1\leq i\leq n_k}$
is finitely valued if
the set $\{u_{k,i}\,:\, k\in\Z,\,1\leq i\leq n_k\}$
is finite.
\begin{hypothesis}[Finiteness hypothesis]
\label{hypothesis-finitude}
The central value of each parameter is finitely valued.
\end{hypothesis}
This will be useful to make various statements uniform with
respect to $k\in\Z$.
By a uniform constant, we mean some number which only depends
on the central value of the parameters.
We use the notation $D(a,r)$ for the disk of center $a$ and
radius $r$ in $\C$.
\subsection{Opening nodes}
Consider infinitely many copies of the Riemann sphere,
labelled $(\CC_k)_{k\in\Z}$.
We denote by $\infty_k$ the point $\infty$ in $\CC_k$, and 
$\C_k=\CC_k\setminus\{\infty_k\}$.
For each $k\in\Z$ and each $1\leq i\leq n_k$, select a point
$a_{k,i}\in\C_k$ and a point $b_{k,i}\in\C_{k+1}$. Identify these two
points to create a node. This defines a Riemann surface with nodes which
we call $\Sigma_0$.
The parameters involved in
this construction are
the sequences $a=(a_{k,i})_{k\in\Z,1\leq i\leq n_k}$ and
$b=(b_{k,i})_{k\in\Z,1\leq i\leq n_k}$.
The central value of these parameters is given, in term of the 
given configuration, by
\def\conj{{\rm conj}}
$$a_{k,i}^0=(-1)^k\conj^k(\ell_k^0+u_{k,i}^0)$$
$$b_{k,i}^0=(-1)^{k+1}\conj^{k+1}(u_{k,i}^0)$$
where $\conj(z)=\overline{z}$ denotes conjugation, so $\conj^k(z)$ is equal to $z$
if $k$ is even and $\overline{z}$ if $k$ is odd.
Observe that $a^0$ and $b^0$
are finitely valued
by hypothesis 2 of theorem \ref{th4}.

\medskip
For any $k\in \Z$, the points
$a_{k,i}^0$ for $1\leq i\leq n_k$ and $b_{k-1,i}^0$ 
for $1\leq i\leq n_{k-1}$ are distinct in $\C_k$. 
That follows from the identity 
$$l^0_k+u^0_{k,i}-u^0_{k-1,j}=p^0_{k,i}-p^0_{k-1,j}$$
and the fact that second member does not vanish
by construction.

\medskip
 
Let $\varepsilon_k>0$ be the smallest value of the
 distance between these points in $\C_k$.
Because of the finiteness hypothesis,
$\{\varepsilon_k\,:\,k\in\Z\}$
is finite so we can take
$\varepsilon=\min\{\varepsilon_k\,:\,k\in\Z\}>0$.
If $||a-a^0||_{\infty}<\frac{\varepsilon}{4}$ and
$||b-b^0||_{\infty}<\frac{\varepsilon}{4}$,
then for any $k\in\Z$, the points
$a_{k,i}$ for $1\leq i\leq n_k$ and $b_{k-1,i}$ for $1\leq i\leq n_{k-1}$ are at distance greater than $\frac{\varepsilon}{2}$
from each other, so they remain distinct.
We will be using this kind of argument very often. We will not
enter in details anymore and simply refer to the finiteness
hypothesis.

\medskip
 
For each $k\in\Z$ we consider a function $g_k$ defined 
on $\CC_k$ by
$$g_k(z)=\sum_{i=1}^{n_{k-1}}\frac{\beta_{k-1,i}}{z-b_{k-1,i}}
-\sum_{i=1}^{n_k}\frac{\alpha_{k,i}}{z- a_{k,i}}.$$
The new parameters are the sequences
$$\alpha=(\alpha_{k,i})_{k\in\Z,1\leq i\leq n_k} \hbox{ and }
\beta=(\beta_{k,i})_{k\in\Z,1\leq i\leq n_k}.$$
We assume that these parameters satisfy the equation
\begin{equation}
\label{eq-alphabeta}
\forall k\in\Z,\quad
\sum_{i=1}^{n_k}\alpha_{k,i}=\sum_{i=1}^{n_k}\beta_{k,i}=1.
\end{equation}
We will see the role of this equation in section \ref{section-gauss-map}.
The central values of these parameters are given by
$$\alpha_{k,i}^0=\beta_{k,i}^0=\frac{1}{n_k}.$$
Recall that the set $\{n_k\,:\,k\in\Z\}$ is finite, so 
the sequences $\alpha^0$ and $\beta^0$
are finitely valued as required.

If $||\alpha-\alpha^0||_{\infty}$ and $||\beta-\beta^0||_{\infty}$
are small enough, we have
$\alpha_{k,i}\neq 0$ and $\beta_{k,i}\neq 0$ for all 
$k\in\Z$ and $1\leq i\leq n_k$.
Then $g_k(z)\sim \frac{-\alpha_{k,i}}{z-a_{k,i}}$ in a neighborhood of $a_{k,i}$, so $\frac{1}{g_k}$
is a local complex coordinate in a neighborhood
of $a_{k,i}$.
In the same way, $g_{k+1}\sim\frac{\beta_{k,i}}{z-b_{k,i}}$
in a neighborhood of $b_{k,i}$, so $\frac{1}{g_{k+1}}$
is a local complex coordinate in a neighborhood of $b_{k,i}$.
The finiteness hypothesis allows us to find a
positive number $\rho$ so that if $a$, $b$, $\alpha$ and
$\beta$ are close enough to $a^0$, $b^0$, $\alpha^0$
and $\beta^0$ in $\ell^{\infty}$ norm, for any $k\in\Z$
and $1\leq i\leq n_k$, $\frac{1}{g_k}$ is a diffeomorphism
from a neighborhood $V_{k,i}$ of $a_{k,i}$ in $\C_k$
to the disk $D(0,\rho)$ and
$\frac{1}{g_{k+1}}$ is a diffeomorphism from a
neighborhood $W_{k,i}$ of $b_{k,i}$ in $\C_{k+1}$
to the disk $D(0,\rho)$.
We define
$$v_{k,i}:=\frac{1}{g_k}: V_{k,i}\stackrel{\sim}{\to} D(0,\rho)
\qquad v_{k,i}(a_{k,i})=0,$$
$$w_{k,i}:=\frac{1}{g_{k+1}} :
W_{k,i}\stackrel{\sim}{\to} D(0,\rho)
\qquad w_{k,i}(b_{k,i})=0.$$
By taking $\rho$ small enough, and still using the finiteness
hypothesis, we can assume that the ratio
$\left|\frac{v_{k,i}}{z-a_{k,i}}\right|$ in $V_{k,i}$
and $\left|\frac{w_{k,i}}{z-b_{k,i}}\right|$ in $W_{k,i}$
are bounded from above and below by some uniform
positive numbers (by which we mean that they are
independent of $k$, $i$ and the value of the parameters).
Hence these coordinates are admissible in the sense of definition 2 of \cite{TT}.

\medskip

We use these coordinates to open the nodes. Consider a real 
parameter $t \in (0,\rho) $.
For each $k\in\Z$ and $1\leq i\leq n_k$,
remove the disks $|v_{k,i}|\leq \frac{t^2}{\rho}$ from $V_{k,i}$
and $|w_{k,i}|\leq \frac{t^2}{\rho}$ from $W_{k,i}$.
Identify each point $z\in V_{k,i}$ with the point $z'\in W_{k,i}$
such that
$$v_{k,i}(z) w_{k,i}(z')=t^2.$$
This creates a neck connecting $\CC_k$ and $\CC_{k+1}$.
We call $\Sigma_t$ the resulting Riemann surface.
\subsection{The Gauss map}
\label{section-gauss-map}
We define a meromorphic function $g$ on $\Sigma_t$, to be the Gauss map, by
$$g(z)=(t g_k(z))^{(-1)^k}=
\left\{\begin{array}{l}
t g_k(z) \mbox{ if $z\in \CC_k$, $k$ even}\\
\frac{1}{t g_k(z)}\mbox{ if $z\in \CC_k$, $k$ odd}
\end{array}\right.$$
This function is well defined because if, say, $k$ is even and
$z\in V_{k,i}\subset\C_k$ is identified with $z'\in W_{k,i}\subset \C_{k+1}$, then
$$g(z)=t g_k(z)
=\frac{t}{v_{k,i}(z)}=\frac{w_{k,i}(z')}{t}=\frac{1}{t g_{k+1}(z')}
=g(z').$$
In a neighborhood of $\infty$ we have, thanks to the normalization
\eqref{eq-alphabeta}
$$g_k(z)\simeq \frac{1}{z^2}\left(
\sum_{i=1}^{n_{k-1}}\beta_{k-1,i} b_{k-1,i}
-\sum_{i=1}^{n_k} \alpha_{k,i} a_{k,i}
\right)$$
so the Gauss map has at least a double zero or pole at $\infty$,
as required for a planar end.
At the central value of the parameters we have
$$
\sum_{i=1}^{n_{k-1}}\beta_{k-1,i}^0 b_{k-1,i}^0
-\sum_{i=1}^{n_k} \alpha_{k,i}^0 a_{k,i}^0
=(-1)^k\conj^k\left(
\frac{1}{n_{k-1}}\sum_{i=1}^{n_{k-1}}p_{k-1,i}^0
-\frac{1}{n_k}\sum_{i=1}^{n_k}p_{k,i}^0
\right).$$
This is non-zero by hypothesis 3 of theorem \ref{th4},
so $g_k$ has a zero of multiplicity precisely
2 at $\infty_k$. 
By the finiteness hypothesis, this remains true when the parameters are close
to their central value in $\ell^{\infty}$ norm.
Hence the Gauss map has a double zero at $\infty_k$ if $k$ is even and a double
pole if $k$ is odd.
\subsection{The height differential}
\label{section-omega}
Fix some small number $\epsilon>0$.
For each $k\in\Z$, let $\Omega_{k,\epsilon}$ be $\C_k$ minus
the disks $D(a_{k,i}^0,\epsilon)$ for $1\leq i\leq n_k$ and
$D(b_{k-1,i}^0,\epsilon)$ for $1\leq i\leq n_{k-1}$.
Let $\Omega_{\epsilon}$ be the disjoint union of the domains $\Omega_{k,\epsilon}$ for
$k\in\Z$.
We assume that $||a-a^0||_{\infty}$, $||b-b^0||_{\infty}$ and $t$
are small enough so that the disks $|v_{k,i}|\leq\frac{t^2}{\rho}$
and $|w_{k,i}|\leq\frac{t^2}{\rho}$ (which were removed when defining
$\Sigma_t$) are included respectively in the disks
$D(a_{k,i}^0,\epsilon)$ and $D(b_{k,i}^0,\epsilon)$.
This allow us to see the fixed domain
$\Omega_{\epsilon}$ as a domain
in $\Sigma_t$.
Let $\Omega^1(\Sigma_t)$ be the space
of holomorphic 1-forms $\omega$ on $\Sigma_t$ such that the norm
$$||\omega||_{L^{\infty}(\Omega_{\epsilon})}
=\sup_{k\in\Z}\sup_{z\in \Omega_{k,\epsilon}}
\left|\frac{\omega}{dz}\right|$$
is finite.
This is well known to be a Banach space.

Next we define natural cycles on $\Sigma_t$.
For any $k\in\Z$ and $1\leq i\leq n_k$, let $A_{k,i}$ be the homology class
in $\Sigma_t$ of the circle $C(b_{k,i},\epsilon)$ in $\C_{k+1}$.
This circle is homologous in $\Sigma_t$ to
the circle $C(a_{k,i},\epsilon)$ with
the opposite orientation.
By theorem 2 in \cite{TT}, for $t$ small enough, the operator
$\omega\mapsto (\int_{A_{k,i}}\omega)_{k\in\Z,1\leq i\leq n_k}$ is
an isomorphism of Banach spaces from $\Omega^1(\Sigma_t)$ to
the set of sequences $\gamma=(\gamma_{k,i})_{k\in\Z,1\leq i\leq n_k}$ in
$\ell^{\infty}$ which satisfy the compatibility relation
\begin{equation}
\label{equation-compatibility}
\forall k\in\Z,\quad \sum_{i=1}^{n_k}\gamma_{k,i}=\sum_{i=1}^{n_{k-1}}\gamma_{k-1,i}.
\end{equation}
(Equation \eqref{equation-compatibility}
is what equation (2) of \cite{TT} becomes in our case.)
So we can define a holomorphic differential $\omega$ on $\Sigma_t$ by prescribing
its $A_{k,i}$-periods as
$$\int_{A_{k,i}}\omega=2\pi {\rm i}\gamma_{k,i},
\qquad k\in\Z,\, 1\leq i\leq n_k.$$
The new parameter is the sequence $\gamma=(\gamma_{k,i})_{k\in\Z,1\leq i\leq n_k}$.
The central value of this parameter is given by $\gamma_{k,i}^0=\frac{1}{n_k}$.
We require that
$$\forall k\in\Z,\quad\sum_{i=1}^{n_k}\gamma_{k,i}=1$$
so that the compatibility relation \eqref{equation-compatibility}
is satisfied.
\begin{proposition}
\label{proposition-omega}
The differential $\omega$ depends smoothly on
all parameters involved in this construction, namely
$a$, $b$, $\alpha$, $\beta$, $\gamma$ and $t$,
in a neighborhood of their respective central value
$a^0$, $b^0$, $\alpha^0$, $\beta^0$, $\gamma^0$ and $0$.
Moreover, when $t=0$, we have $\omega=\omega_k$ in $\C_k$,
where
$$\omega_k=
\sum_{i=1}^{n_{k-1}} \frac{\gamma_{k-1,i}}{z-b_{k-1,i}}dz
-\sum_{i=1}^{n_k}\frac{\gamma_{k,i}}{z-a_{k,i}}dz.$$
\end{proposition}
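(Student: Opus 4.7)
The plan is to apply Theorem 2 of \cite{TT} and to identify the limit differential at $t=0$ by hand. First I would produce a natural candidate at $t=0$, namely the differential $\omega_0$ defined piecewise by $\omega_0|_{\C_k}=\omega_k$, and check that it is a regular differential on the nodal surface $\Sigma_0$. Reading off the formula, $\omega_k$ has simple poles with residues $-\gamma_{k,j}$ at $a_{k,j}$ and $+\gamma_{k-1,j}$ at $b_{k-1,j}$. The normalization $\sum_{j=1}^{n_k}\gamma_{k,j}=1$ makes the total residue on $\CC_k$ vanish, so $\omega_k$ extends holomorphically across $\infty_k$. At the node pairing $a_{k,i}\in\C_k$ with $b_{k,i}\in\C_{k+1}$, the two sides contribute residues $-\gamma_{k,i}$ and $+\gamma_{k,i}$; these are opposite, which is precisely the regularity condition at a node.

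Next I would verify that $\omega_0$ satisfies the prescribed $A_{k,i}$-periods. Since $A_{k,i}$ is represented by the circle $C(b_{k,i},\epsilon)\subset\C_{k+1}$ and the residue of $\omega_{k+1}$ at $b_{k,i}$ equals $\gamma_{k,i}$, the residue theorem gives
$$\int_{A_{k,i}}\omega_0 = 2\pi{\rm i}\,\gamma_{k,i},$$
as required. The finiteness hypothesis \ref{hypothesis-finitude} also shows that $\omega_0$ lies in $\Omega^1(\Sigma_0)$ with uniformly bounded norm. By the uniqueness part of Theorem 2 of \cite{TT}, the differential $\omega$ obtained from the period isomorphism at $t=0$ must equal $\omega_0$, which yields the explicit formula.

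For smoothness, the dependence on $\gamma$ is linear and therefore smooth. The remaining parameters $a,b,\alpha,\beta,t$ enter only through the Riemann surface $\Sigma_t$ and through the admissible coordinates $v_{k,i}$, $w_{k,i}$ used to open the nodes; these depend smoothly on the parameters by direct inspection of their definitions. Theorem 2 of \cite{TT} delivers the period isomorphism as a smooth family under smooth variation of the node-opening data, and composing with the linear map $\gamma\mapsto(2\pi{\rm i}\,\gamma_{k,i})$ gives the desired smoothness of $\omega$. The main technical point, and the only real obstacle, is ensuring that the uniform admissibility estimates required in \cite{TT} hold uniformly in $k\in\Z$. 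This is precisely what the finiteness hypothesis guarantees: the central values of the parameters take only finitely many distinct values, so the radius $\rho$ and the constants comparing $v_{k,i}$ to $z-a_{k,i}$ (and $w_{k,i}$ to $z-b_{k,i}$) can be chosen independently of $k$, placing us squarely in the framework of \cite{TT}.
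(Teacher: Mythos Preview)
Your argument is correct and follows essentially the same approach as the paper: you identify the $t=0$ limit as the explicit regular differential with the prescribed residues and invoke the results of \cite{TT} for smoothness, exactly as the authors do (they are simply terser, and they cite Theorem~4 of \cite{TT} for the smoothness rather than Theorem~2, which in \cite{TT} is only the period isomorphism). The extra verifications you spell out---holomorphicity at $\infty_k$, opposite residues at paired nodes, and the $A$-period check---are implicit in the paper's appeal to the notion of regular differential.
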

\begin{proof} The smoothness statement is theorem 4 in \cite{TT}.
Smoothness is for the norm defined above: 
in particular this means 
that the restriction of $\omega$ to the (fixed) domain $\Omega_{\epsilon}$ depends
smoothly on all parameters.

When $t=0$, $\omega$ is a regular differential on $\Sigma_0$ (see definition
1 in \cite{TT}) so has simple poles at all points
$a_{k,i}$, $b_{k,i}$.
The residues are determined by the prescribed periods, and this gives the
claimed formula.\end{proof}

\subsection{The equations we have to solve}
We define the Weierstrass data on $\Sigma_t$ by the standard formula
$$(\phi_1,\phi_2,\phi_3)=\left(\frac{1}{2}(g^{-1}-g)\omega,
\frac{{\rm i}}{2}(g^{-1}+g)\omega,\omega\right).$$
The minimal surface is defined by the Weierstrass representation formula :
$$\psi(z)=\Re\int_{z_0}^z (\phi_1,\phi_2,\phi_3) : \Sigma_t\to\R^3\cup\{\infty\}.$$
The points $\infty_k$ for $k\in\Z$ have to be the planar ends of our minimal surface.

For $\psi$ to be a regular immersion, we need that at each zero or pole
of the Gauss map $g$, which does not corresponds to an end, $\omega$ has a zero of the same order.
At the end $\infty_k$, the Gauss map has a double zero or pole.
To have an embedded planar end, 
we need that $\omega$ is holomorphic (which is already the case) and
does not vanish. We deal with these conditions in section \ref{section-zeros}.

Then we need $\psi(z)$ to be independent of the integration path from $z_0$ to $z$,
this is the period problem.
We fix some small number $\epsilon'>0$ and
we define the cycle $B_{k,i}$ for
$k\in\Z$ and $2\leq i\leq k$ as the composition of the following four paths :
\begin{enumerate}
\item a fixed path from $a_{k,i}^0+\epsilon'$ to $a_{k,1}^0+\epsilon'$,
\item a path from $a_{k,1}^0+\epsilon'$ to $b_{k,1}^0+\epsilon'$, going through the neck,
\item a fixed path from $b_{k,1}^0+\epsilon'$ to $b_{k,i}^0+\epsilon'$,
\item a path from $b_{k,i}^0+\epsilon'$ to $a_{k,i}^0+\epsilon'$, going through the neck.
\end{enumerate}
Note that unlike $A_{k,i}$,
the cycle $B_{k,i}$ is not defined when $t=0$.
We need to solve the following period problem :
$$\Re\int_{A_{k,i}}\phi_{\nu}=0\qquad
k\in\Z,\,1\leq i\leq n_k,\,1\leq \nu\leq 3,$$
$$\Re\int_{B_{k,i}}\phi_{\nu}=0\qquad
k\in\Z,\,2\leq i\leq n_k,\,1\leq \nu\leq 3.$$
Indeed, the first condition ensures that $\Re\int\phi_{\nu}$ is well defined
in each $\C_k$, and in particular that the residue at $\infty$ vanishes as required
for a planar end. The second condition ensures that $\Re\int\phi_{\nu}$
does not depend on the choice of the path from $\C_k$ to $\C_{k+1}$.

\medskip

Our strategy consists in proving that these equations, suitably
renormalized, extend smoothly at $t=0$, and in solving them
using the implicit function theorem at $t=0$ to determine
all parameters as functions of $t$.
\subsection{Zeros of $\omega$.}
\label{section-zeros}
Let us first locate the zeros and poles of $g$.
If $k$ is even, then the zeros of $g$ in $\C_k$ are the zeros of $g_k$, and $g$
has no poles in $\C_k$ (because the poles of $g_k$ were removed when opening nodes).
If $k$ is odd, the poles of $g$ in $\C_k$ are the zeros of $g_k$, and
$g$ has no zeros in $\C_k$.
What we need is that for each $k\in\Z$,
$\omega$ has a zero at each zero of $g_k$ in
$\C_k$, with the same multiplicity, and has no further zeros.
\begin{proposition}
\label{proposition-zeros}
For $(t,a,b,\gamma)$ in a neighborhood of $(0,a^0,b^0,\gamma^0)$,
there exist values of the parameters $\alpha$ and $\beta$, 
depending smoothly on $(t,a,b,\gamma)$, such that for 
all $k\in\Z$, $\omega$ has a zero at each finite zero of $g_k$ in $\C_k$,
with the same multiplicity, and has no further zero.
Moreover, when $t=0$,
we have $\alpha(0,a,b,\gamma)=\beta(0,a,b,\gamma)=\gamma$.
\end{proposition}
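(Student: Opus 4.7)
The plan is to apply the implicit function theorem in $\linf$-norm centered at $t=0$, $\alpha=\beta=\gamma=\gamma^0$. The key observation is that at $t=0$, Proposition \ref{proposition-omega} gives $\omega|_{\C_k}=\omega_k$, and comparing formulas shows that when $\alpha=\beta=\gamma$ one has $\omega_k=g_k\,dz$; so at the center of the IFT argument the zeros of $\omega$ coincide tautologically with those of $g_k$. The normalizations \eqref{eq-alphabeta} together with hypothesis 3 of Theorem \ref{th4} imply that $g_k$ is a rational function on $\CC_k$ with $n_k+n_{k-1}$ simple poles and a zero of order exactly $2$ at $\infty_k$, so it has precisely $m_k:=n_k+n_{k-1}-2$ finite zeros in $\C_k$. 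For simplicity I treat the generic case where these central zeros are simple; higher multiplicities are handled by replacing the pointwise evaluation below with successive Taylor coefficients.

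Fix $\epsilon>0$ so that the central zeros of $g_k$ lie well inside $\Omega_{k,\epsilon}$, and let $z_{k,j}(a,b,\alpha,\beta)$, $1\le j\le m_k$, be these zeros; they depend smoothly on the parameters. I would set up the IFT equations
\[
E_{k,j}(t,a,b,\gamma,\alpha,\beta)\;:=\;\frac{\omega}{dz}(z_{k,j}),\qquad k\in\Z,\ 1\le j\le m_k,
\]
on the affine Banach space of $(\alpha,\beta)$ satisfying \eqref{eq-alphabeta}, equipped with the $\linf$ norm. Smoothness in all parameters follows from Proposition \ref{proposition-omega} together with the finiteness hypothesis, and at the central point $E_{k,j}=g_k(z_{k,j})=0$.

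The crucial step is to check that the Jacobian $\partial E/\partial(\alpha,\beta)$ at the central point is an isomorphism. Since $\omega$ at $t=0$ is independent of $(\alpha,\beta)$, only $z_{k,j}$ varies; differentiating $g_k(z_{k,j})=0$ gives $\delta E_{k,j}=-\delta g_k(z_{k,j})$. Because $\alpha_k$ and $\beta_{k-1}$ appear only in $g_k$, the Jacobian is block diagonal in $k$, each block being the map
\[
(\delta\alpha_k,\delta\beta_{k-1})\;\longmapsto\;\bigl(\delta g_k(z_{k,j})\bigr)_{1\le j\le m_k}
\]
on the $m_k$-dimensional subspace $\{\sum_i\delta\alpha_{k,i}=\sum_i\delta\beta_{k-1,i}=0\}$. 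To show injectivity, set $Q_k(z):=\prod_i(z-a_{k,i})\prod_i(z-b_{k-1,i})$; the two sum constraints force $\delta g_k=O(1/z^2)$ at $\infty$, so $\delta g_k\cdot Q_k$ is a polynomial of degree at most $m_k$ vanishing at the $m_k$ roots of $g_k\cdot Q_k$, whence $\delta g_k=c\,g_k$ for some scalar $c$. Matching residues yields $\delta\alpha_{k,i}=c\,\alpha^0_{k,i}$, and summing gives $c=\sum_i\delta\alpha_{k,i}=0$, so $\delta g_k=0$. By the finiteness hypothesis the blocks are uniformly invertible, and hence the full operator is invertible on $\linf$.

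The IFT then produces smooth $(\alpha,\beta)(t,a,b,\gamma)$ solving $E_{k,j}\equiv 0$ near the central values; at $t=0$ the candidate $(\alpha,\beta)=(\gamma,\gamma)$ already satisfies all equations, so by uniqueness $\alpha(0,a,b,\gamma)=\beta(0,a,b,\gamma)=\gamma$. The \emph{no further zeros} assertion is then verified by Rouch\'e's theorem: on $\Omega_{k,\epsilon}$, $\omega/dz$ is uniformly close to $\omega_k/dz=g_k$, whose only zeros there are the $z_{k,j}$, so $\omega/dz$ has exactly the same $m_k$ zeros, coinciding with the $z_{k,j}$ by construction; on each removed neck disk inside $D(a_{k,i}^0,\epsilon)$ or $D(b_{k-1,i}^0,\epsilon)$, $\omega/dz$ is close to a function with a simple pole of uniformly nonzero residue and is therefore bounded below in modulus; non-vanishing of $\omega$ at $\infty_k$ was already recorded in Section \ref{section-gauss-map} using hypothesis 3. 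I expect the main difficulty to be the injectivity argument for the Jacobian block, which ultimately rests on the rigidity imposed by the double residue normalization \eqref{eq-alphabeta}.
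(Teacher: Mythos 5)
Your injectivity argument for the block $(\delta\alpha_k,\delta\beta_{k-1})\mapsto \delta g_k$ is essentially identical to the paper's (compare Lemma~\ref{lemma-zeros2}: $\delta g_k\cdot Q_k$ is a polynomial of degree $\le m_k$ vanishing at the roots of $P_k$, hence $\delta g_k = c\,g_k$, and the normalization forces $c=0$). The block-diagonal structure and the use of the finiteness hypothesis for uniform invertibility also match the paper. The part that diverges, and where there is a genuine gap, is the choice of IFT equations.

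You define $E_{k,j} = (\omega/dz)(z_{k,j})$ where $z_{k,j}$ are the finite zeros of $g_k$, and you assert that "they depend smoothly on the parameters." This is false in general: the individual roots of a polynomial (here, of $P_k$) are not smooth --- not even continuous as labeled functions --- across parameter values where multiplicities change, and the parameters $a,b,\alpha,\beta$ you are varying are exactly the coefficients of $Q_k$ and $P_k$. Your fallback, "higher multiplicities are handled by replacing the pointwise evaluation below with successive Taylor coefficients," does not rescue smoothness: which Taylor coefficients you impose at which point changes discontinuously when a double zero splits into two simple ones, so the map $E$ is not a fixed smooth map on a neighborhood in $\ell^\infty$. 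This is precisely the difficulty the paper avoids by replacing the root-by-root conditions with the \emph{moment} conditions $\boZ_{k,i}=\int_{\partial U_k} z^i\,\omega/P_k(z)$, $0\le i\le m_k-1$, over a \emph{fixed} contour $\partial U_k$. Those integrals are manifestly smooth in all parameters (no root-tracking needed), Lemma~\ref{lemma-zeros} shows $\boZ_k=0$ forces $\omega/P_k$ holomorphic in $U_k$ (hence vanishing at zeros of $g_k$ with at least the right multiplicities), and the separate zero count via the argument principle then gives exact equality of multiplicities. To repair your proof you would need to replace the evaluations $E_{k,j}$ by some multiplicity-robust, contour-based functional --- and at that point you are reconstructing the paper's $\boZ$.

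A smaller point: your treatment of "no further zeros" on the necks is too quick. Near $\infty_k$ and on $\Omega_{k,\epsilon}$ the Rouché/lower-bound argument is fine, but on a neck the form $\omega$ is \emph{not} uniformly close to something with a simple pole --- the neck is an annulus in $\Sigma_t$, not a punctured disk, and $\omega$ is holomorphic there with a nonzero period $2\pi\mathrm{i}\gamma_{k,i}$, not a pole. The paper invokes a quantitative result (Corollary~1 of \cite{TT}) giving non-vanishing on the neck provided $|\gamma_{k,i}|\ge C\epsilon\|\gamma\|_\infty$; that hypothesis is then verified from boundedness of $(n_k)$. Some substitute for that estimate is needed; "bounded below in modulus" does not follow from the neck geometry alone.
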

\begin{proof}
Since $g_k$ has a double zero at $\infty_k$, it has $n_k+n_{k-1}-2$ finite zeros
in $\C_k$, counting multiplicities. Let us count the number of zeros of
$\omega$ in $\C_k$.
By the finiteness hypothesis,
we may choose $\epsilon>0$ small enough
and $R$ large enough (both independent of $k$ and the parameters)
so that $U_k=\Omega_{k,\epsilon}\cap D(0,R)$
contains all zeros of $g_k$.
Let us write $\omega=f_k(z) dz$ in $\C_k$. The number of zeros of $\omega$
in $U_k$, counting multiplicities, is given by
$$N_k=\frac{1}{2\pi {\rm i}}\int_{\partial U_k}\frac{df_k}{f_k}.$$
At the central value of the parameters, we have $f_k=g_k$, so 
$f_k$ does not vanish on $\partial U_k$. Thanks to the
finiteness hypothesis, we may find a number $c>0$ independent of
$k$ such that for all $k$,
$|f_k|\geq c$ on $\partial U_k$ (still, at the central value of the
parameters).
By smooth dependence of $\omega$ on parameters, we have
$|f_k|\geq \frac{c}{2}$ on $\partial U_k$ when the parameters stay close,
in $\ell^{\infty}$ norm, to their central value.
Hence $N_k$ is a smooth, integer valued function of the parameters, so it is
constant. This proves that for each $k\in\Z$,
$\omega$ has $n_k+n_{k-1}-2$ zeros in $U_k$.

Let us now see that $\omega$ has no further zeros. It is proven in \cite{TT},
corollary 1, that there exists a uniform constant $C$ (independent
of $\epsilon$) such that the following is true 
for $t$ small enough : for any $k\in\Z$ and any $1\leq i\leq n_k$, if
$|\gamma_{k,i}|\geq C\epsilon ||\gamma||_{\infty}$, then $\omega$
has no zero in the annulus in $\Sigma_t$ bounded by the circles
$C(a_{k,i},\epsilon)$ in $\C_k$ and
$C(b_{k,i},\epsilon)$ in $\C_{k+1}$
(this annulus is what we call a neck).
Recall that the central value of the parameter
$\gamma_{k,i}$ is $\gamma_{k,i}^0=\frac{1}{n_k}$. Since the sequence
$(n_k)_{k\in\Z}$ is bounded, the ratio $\frac{|\gamma_{k,i}|}{||\gamma||_{\infty}}$
is bounded from below by a uniform positive number when the parameter
$\gamma$ is close to $\gamma^0$. Hence, provided we choose $\epsilon>0$
small enough, $\omega$ has no zeros on the necks.

It remains to consider the zeros of $\omega$ outside the disk $D(0,R)$ in 
$\C_k$. We introduce the coordinate $w=1/z$ on this domain and write
$\omega=h_k(w) dw$, $w\in D(0,\frac{1}{R})$.
At the central value of the parameters, the function $h_k$ has no zero, so $|h_k|$
is bounded from below by a constant independent of $k$ by the finiteness hypothesis.
Hence when the parameters are close to their central value, we have that for
any $k$, $\omega$ has no zero outside the disk $D(0,R)$ in $\C_k$.

Next we need to adjust the parameters $\alpha$ and $\beta$ so that $\omega$
vanishes at the zeros of $g_k$. One problem is that we do not know a priori
the multiplicities of the zeros of $g_k$. The following lemma is useful.
\begin{lemma}
\label{lemma-zeros}
Let $P$ be a polynomial of degree $d$ and $\Omega$ be a domain in $\C$ containing 
all the zeros of $P$.
Given a holomorphic function $f$ in $\Omega$,
let
$$F_i=\int_{\partial\Omega}\frac{z^i f(z)}{P(z)}dz.$$
If $F_i=0$ for all $0\leq i\leq d-1$ then $f/P$ is holomorphic in $\Omega$.
\end{lemma}
\noindent
{\it Proof of lemma \ref{lemma-zeros}}.
By the Weierstrass Preparation Theorem,
we may write $f=Ph+r$, where $h$ is holomorphic in $\Omega$ and $r$ is a polynomial
of degree less than $d$.
By Cauchy Theorem,
$$F_i=\int_{\partial\Omega}\frac{z^i r(z)}{P(z)}dz
=-2\pi {\rm i}
 \Res_{\infty} \left(\frac{ z^i r(z)}{P(z)}dz\right).$$
To establish last identity, we have used the Residue theorem on the complementary of
$\Omega$ in $\C$, and the fact that $\Omega$ contains all the zeros of $P$ so
the only pole is at infinity.
Assume that $r\neq 0$ and let $k=\deg(r)$. Take $i=d-k-1$. By a straightforward
computation, we get
$$\Res_{\infty} \frac{z^i r(z) dz}{P(z)}=-\frac{r_k}{p_d}$$
where $r_k$ and $p_d$ are respectively the leading coefficients of the polynomials
$r$ and $p$. Hence $F_i=0$ implies that $r_k=0$, contradicting the fact that $r$
has degree $k$. Hence $r=0$ so $f/P=h$ is holomorphic in $\Omega$.
\end{proof}

Returning to the proof of proposition \ref{proposition-zeros},
let
$$Q_k=\prod_{i=1}^{n_k} (z-a_{k,i})\prod_{i=1}^{n_{k-1}} (z-b_{k-1,i})$$
and write $g_k=\frac{P_k}{Q_k}$, so $P_k$ is a polynomial of degree
$n_k+n_{k-1}-2$.
Define, for $k\in\Z$ and $0\leq i\leq n_k+n_{k-1}-3$
$$\boZ_{k,i}=\int_{\partial U_k}\frac{z^i\omega}{P_k(z)}.$$
Let $\boZ=\boZ(t,a,b,\alpha,\beta,\gamma)=(\boZ_{k,i})_{k\in\Z,0\leq i\leq n_k+n_{k-1}-3}$.
This is a smooth function of all parameters (with the $\ell^{\infty}$ norm
on the target space).
If $\boZ=0$, then by the lemma, for all $k\in\Z$,
$\omega/P_k$ is holomorphic in $U_k$, so at each zero of $g_k$, $\omega$
has a zero with at least same order. 
Since we counted the number of zeros to
be the same, the multiplicities are equal as required.
The proposition then follows from the following lemma and the implicit
function theorem.
(The last statement of the proposition follows from the uniqueness part
of the implicit function theorem.)
\begin{lemma}
\label{lemma-zeros2}
When $t=0$ and $\alpha=\beta=\gamma$, we have
$\boZ(0,a,b,\gamma,\gamma,\gamma)=0$. Moreover, the partial differential
of $\boZ$ with respect to $(\alpha,\beta)$ at $(0,a^0,b^0,\alpha^0,\beta^0,\gamma^0)$
is an isomorphism of Banach spaces.
\end{lemma}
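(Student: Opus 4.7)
The first claim reduces to a routine residue check. By proposition \ref{proposition-omega}, at $t=0$ the differential $\omega$ on $\C_k$ equals $\omega_k = \sum_i \gamma_{k-1,i}/(z-b_{k-1,i})\,dz - \sum_i \gamma_{k,i}/(z-a_{k,i})\,dz$. Setting $\alpha=\beta=\gamma$ makes $\omega = g_k\,dz = (P_k/Q_k)\,dz$, so $z^i \omega/P_k = z^i \,dz/Q_k$. The zeros of $Q_k$ are the points $a_{k,j}$ and $b_{k-1,j}$, all of which lie in the removed disks and therefore outside $U_k$; Cauchy's theorem then gives $\boZ_{k,i}=0$.

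For the invertibility statement, the plan is to exploit a block-diagonal structure. Because $\omega$ depends on $(\gamma,a,b,t)$ but not on $(\alpha,\beta)$, and $P_k$ depends on $(\alpha,\beta)$ only through the pair $(\alpha_k,\beta_{k-1})$, the partial differential $d_{(\alpha,\beta)}\boZ$ decomposes as a direct sum of blocks $(\dot\alpha_k,\dot\beta_{k-1}) \mapsto d\boZ_k$. By hypothesis \ref{hypothesis-finitude}, only finitely many distinct blocks appear in the sequence, so it will be enough to show that each block is a finite-dimensional isomorphism; standard block-diagonal reasoning will then deliver uniform bounds on the inverse in the $\ell^{\infty}$ sense.

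For the $k$-th block I would differentiate under the integral sign. Using $P_k = Q_k g_k$ (with $Q_k$ independent of $\alpha,\beta$) and the identity $\omega = (P_k^0/Q_k^0)\,dz$ at the central parameters, the derivative collapses to
$$d\boZ_{k,i}(\dot\alpha_k,\dot\beta_{k-1}) = -\int_{\partial U_k} \frac{z^i\,\dot g_k}{P_k^0}\,dz,\qquad \dot g_k = \sum_i \frac{\dot\beta_{k-1,i}}{z-b_{k-1,i}^0}-\sum_i \frac{\dot\alpha_{k,i}}{z-a_{k,i}^0}.$$
Crucially, $\dot g_k$ is holomorphic in $U_k$ (its poles lie in the removed disks), while $P_k^0$ has degree $n_k+n_{k-1}-2$ with all its zeros in $U_k$ (by hypothesis 3 of theorem \ref{th4} and the choice of $R$). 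If the block vanishes, lemma \ref{lemma-zeros} forces $\dot g_k/P_k^0$ to be holomorphic in $U_k$. Writing $\dot P_k := Q_k^0\,\dot g_k$, the constraints $\sum_i \dot\alpha_{k,i}=\sum_i \dot\beta_{k-1,i}=0$ yield $\dot g_k = O(1/z^2)$ at infinity, hence $\deg \dot P_k \le n_k+n_{k-1}-2$; combined with the divisibility $P_k^0 \mid \dot P_k$, this forces $\dot P_k = c\, P_k^0$ for some constant $c$. The constraint $\sum_i \dot\alpha_{k,i}=0$ then evaluates via residues to $c \sum_i \alpha_{k,i}^0 = c = 0$, so $\dot P_k=0$ and the block is injective; a dimension count ($n_k+n_{k-1}-2$ on both sides) upgrades injectivity to an isomorphism.

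The main technical obstacle will be massaging the linearized integral into the clean form matching lemma \ref{lemma-zeros}, so that $\dot g_k$ plays the role of the holomorphic numerator and $P_k^0$ the denominator whose degree matches the index range; once that alignment is achieved, the degree count combined with the single remaining constraint closes the argument, and the finiteness hypothesis handles the passage from per-block invertibility to an isomorphism of Banach spaces.
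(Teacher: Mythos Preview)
Your proposal is correct and follows essentially the same route as the paper: both arguments use that $\omega$ is independent of $(\alpha,\beta)$ at $t=0$ to obtain the block-diagonal form $d\boZ_{k,i}=-\int_{\partial U_k} z^i \dot g_k\,dz/P_k^0$, invoke lemma \ref{lemma-zeros} to conclude $\dot g_k/P_k^0$ is holomorphic in $U_k$, deduce $\dot g_k = c\, g_k^0$ from the degree comparison, kill $c$ via the normalization $\sum_i\dot\alpha_{k,i}=0$, and finally appeal to the finiteness hypothesis for uniform bounds on the block inverses. Your degree bookkeeping ($P_k^0\mid \dot P_k$ with $\deg\dot P_k\le\deg P_k^0$) is in fact slightly cleaner than the paper's phrasing ``same zeros, hence proportional''.
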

\noindent
{\it Proof of lemma \ref{lemma-zeros2}}.
If $t=0$ and $\alpha=\beta=\gamma$, then by proposition \ref{proposition-omega},
$\omega=g_k dz$ in $\C_k$ for all $k\in\Z$, so $\boZ=0$.
We write $D$ for the partial differential with respect to $(\alpha,\beta)$ at
the central value of the parameters.
Observe that when $t=0$, $\Sigma_0$ does not depend on $(\alpha,\beta)$,
so $\omega$ does not depend on $(\alpha,\beta)$ and $D\omega=0$.
Hence
$$D\left(\frac{z^i\omega}{P_k}\right)=
-\frac{z^i\omega}{P_k^2}DP_k
=-\frac{z^idz}{P_k Q_k} DP_k
=-\frac{z^idz}{P_k}D g_k$$
$$D \boZ_{k,i}(\alpha,\beta)=
-
\int_{\partial U_k}\frac{z^i}{P_k} f_k dz
\quad \mbox{ with }
f_k=
\sum_{j=1}^{n_{k-1}}\frac{\beta_{k-1,j}}{z-b_{k-1,j}^0}
-\sum_{j=1}^{n_k}\frac{\alpha_{k,j}}{z-a_{k,j}^0}.  $$
From this we see that $D \boZ_k(\alpha,\beta)$ only depends
on $\alpha_k$ and $\beta_{k-1}$ 
(so $D\boZ$ has ``block diagonal'' form).
Let us prove that for each $k$,
the operator $(\alpha_k,\beta_{k-1})
\mapsto D\boZ_k(\alpha_k,\beta_{k-1})$ is
an isomorphism. The domain and target spaces both have finite
complex dimension
$n_k+n_{k-1}-2$, because by normalization of the parameters,
we have $\sum_{i=1}^{n_k}\alpha_{k,i}=\sum_{i=1}^{n_{k-1}}\beta_{k-1,i}=0$ (instead of $1$
since we are in the tangent space to the parameter space when we
compute the differential).
Let $(\alpha_k,\beta_{k-1})$ be in the kernel. 
Then by lemma \ref{lemma-zeros}, the function $\frac{f_k}{P_k}$ is holomorphic
in $U_k$.
Hence the polynomials $f_k Q_k$ and $P_k$, which have the same degree,
have the same zeros, hence are proportional.
So there exists $\lambda\in\C$ such that
$\alpha_k=\lambda \alpha_k^0$ and
$\beta_{k-1}=\lambda\beta_{k-1}^0$.
Because of the normalizations this gives $\alpha_k=\beta_{k-1}=0$.
Hence for each $k$, the operator $(\alpha_k,\beta_{k-1})\mapsto
D\boZ_k(\alpha_k,\beta_{k-1})$ is an isomorphism. By the finiteness hypothesis,
the inverse of these operators is bounded by a constant independent of $k$.
It readily follows that $D\boZ$ is an isomorphism of Banach spaces from
$\ell^{\infty}$ to $\ell^{\infty}$.
\cqfd
\subsection{The period problem for $\omega$}
\begin{proposition}
\label{proposition-period-omega}
Assume that $\alpha$ and $\beta$ are given by proposition
\ref{proposition-zeros}.
For $(t,a,b)$ in a neighborhood of $(0,a^0,b^0)$, there exist values
of the parameter $\gamma$, depending continuously on
$(t,a,b)$, such that the following period problem is solved :
$$\Re\int_{A_{k,i}}\omega=0,\qquad k\in\Z,\,1\leq i\leq n_k,$$
$$\Re\int_{B_{k,i}}\omega=0,\qquad k\in\Z,\,2\leq i\leq n_k.$$
Moreover, when $t=0$, we have
$\gamma_{k,i}(0,a,b)=\frac{1}{n_k}$.
\end{proposition}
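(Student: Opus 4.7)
The $A$-period equations reduce immediately to a constraint on $\gamma$: by construction $\int_{A_{k,i}}\omega = 2\pi\,{\rm i}\,\gamma_{k,i}$, so $\Re\int_{A_{k,i}}\omega=0$ is equivalent to $\gamma_{k,i}\in\R$. I would therefore restrict $\gamma$ to real-valued sequences subject to $\sum_{i=1}^{n_k}\gamma_{k,i}=1$, a real affine subspace of $\ell^{\infty}$ containing $\gamma^0$.

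For the $B$-period equations, the cycle $B_{k,i}$ crosses two necks, one at $(a_{k,1},b_{k,1})$ and one at $(a_{k,i},b_{k,i})$. Splitting $B_{k,i}$ into its four pieces, integrating $\omega$ in the admissible coordinates $v_{k,j}$, $w_{k,j}$ near the nodes, and using the residues $-\gamma_{k,j}$ at $a_{k,j}$ in $\C_k$ and $+\gamma_{k,j}$ at $b_{k,j}$ in $\C_{k+1}$ (together with the smooth dependence statement of Proposition \ref{proposition-omega}), I expect an asymptotic expansion of the form
\[
\int_{B_{k,i}}\omega \;=\; 2(\gamma_{k,i}-\gamma_{k,1})\log t \;+\; H_{k,i}(t,a,b,\gamma),
\]
where $H_{k,i}$ extends smoothly to $t=0$, uniformly in $k\in\Z$ thanks to the finiteness hypothesis and the $\ell^{\infty}$-machinery of \cite{TT}. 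Both terms are real when $\gamma$ is real, so I would rewrite $\Re\int_{B_{k,i}}\omega=0$ (for $0<t$ small) as
\[
\Psi_{k,i}(t,a,b,\gamma) \;:=\; (\gamma_{k,i}-\gamma_{k,1}) \;+\; \frac{H_{k,i}(t,a,b,\gamma)}{2\log t} \;=\; 0,
\]
which extends continuously to $t=0$ under the convention $1/\log 0 := 0$.

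I would then apply the implicit function theorem to $\Psi=(\Psi_{k,i})_{k\in\Z,\,2\le i\le n_k}$ on the Banach space of real sequences $\delta=(\delta_{k,i})_{k,\,i\ge 2}$ in $\ell^{\infty}$, where $\delta_{k,i}:=\gamma_{k,i}-\gamma_{k,1}$ parametrises the admissible $\gamma$ via the normalization $n_k\gamma_{k,1}+\sum_{i\ge 2}\delta_{k,i}=1$. At $t=0$ the system collapses to $\delta_{k,i}=0$, whose unique solution on the constraint subspace is $\gamma=\gamma^0$. The partial differential of $\Psi$ with respect to $\delta$ at $(t,a,b,\gamma)=(0,a^0,b^0,\gamma^0)$ is the identity on $\ell^{\infty}$, hence trivially an isomorphism, so the implicit function theorem furnishes a unique $\gamma=\gamma(t,a,b)$ depending continuously on $(t,a,b)$ near $(0,a^0,b^0)$, with $\gamma(0,a,b)=\gamma^0$, i.e.\ $\gamma_{k,i}(0,a,b)=1/n_k$.

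The main obstacle is the uniform control of the remainder $H_{k,i}$: I must verify both that the logarithmic expansion is valid uniformly in $k$ and that $H_{k,i}$ enjoys bounded, differentiable $\ell^{\infty}$-dependence on all parameters. This is precisely what the finiteness hypothesis, combined with the asymptotic technology of \cite{TT}, delivers, by reducing the estimates to finitely many local models at the nodes. The loss from smooth to merely continuous dependence on $t$ at $t=0$ is the unavoidable price of the $1/\log t$ factor in the regularised equation.
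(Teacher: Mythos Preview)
Your argument is correct and closely parallels the paper's proof: both reduce the $A$-periods to $\gamma_{k,i}\in\R$, both isolate the leading term $2(\gamma_{k,i}-\gamma_{k,1})\log t$ in the $B$-periods (this is exactly Lemma~\ref{lemma-period-omega}), and both apply the implicit function theorem to the resulting block-diagonal linearisation $\delta\mapsto\delta$.

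The one genuine difference is your regularisation. You divide by $2\log t$ and set $1/\log 0:=0$, obtaining a map $\Psi$ that is only continuous in $t$; this forces you to invoke the parameter version of the implicit function theorem (continuous $F$, continuous and invertible $D_\gamma F$) and yields $\gamma$ merely continuous in $t$. The paper instead substitutes $t=e^{-1/\tau^2}$ and defines $\boV_{k,i}=\tau^2\,\Re\int_{B_{k,i}}\omega$; since $\tau^2\log t=-1$, this produces a \emph{smooth} function of $\tau$, and the standard implicit function theorem then gives $\gamma$ smooth in $(\tau,a,b)$. For the proposition as stated your weaker conclusion suffices, but the paper's smoothness in $\tau$ is actually used downstream (see the remark following the proposition, and the subsequent sections where all maps are differentiated in $\tau$), so your route would eventually need upgrading.

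One minor point: your claim that ``both terms are real when $\gamma$ is real'' is not quite right for the full complex integral $\int_{B_{k,i}}\omega$; what you need (and what Lemma~\ref{lemma-period-omega} provides) is the smooth extension of $\Re\int_{B_{k,i}}\omega-2(\gamma_{k,i}-\gamma_{k,1})\log t$, so your $H_{k,i}$ should be interpreted as the real part of the remainder.
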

\begin{proof}
The period problem for the cycles $A_{k,i}$ is equivalent to $\gamma_{k,i}\in\R$,
which we assume from now on.
Regarding the cycles $B_{k,i}$, we need the following
\begin{lemma}
\label{lemma-period-omega}
The function
$$\left(\Re\int_{B_{k,i}}\omega
\; - 2(\gamma_{k,i}- \gamma_{k,1})\log t\right)_{k\in\Z,2\leq i\leq n_k}$$
extends smoothly at $t=0$ to a smooth function of all parameters
(with the $\ell^{\infty}$ norm on the target space).
\end{lemma}
The proof of this lemma is technical and given in appendix \ref{appendix}.
We make the change of variable
$t=e^{-1/\tau^2}$, where $\tau>0$ is in a neighborhood of $0$.
Define
$$\boV_{k,i}=\tau^2\Re\int_{B_{k,i}}\omega$$
and $\boV=(\boV_{k,i})_{k\in\Z,2\leq i\leq n_k}$.
By the lemma, the function $\boV$ extends smoothly at $\tau=0$ to a smooth function
of the parameters $(\tau,a,b,\gamma)$.
Moreover, when $\tau=0$, we have
$$\boV(0,a,b,\gamma)=-2(\gamma_{k,i}-\gamma_{k,1}).$$
For each $k\in\Z$, the partial differential of $\boV_k$ with respect to $\gamma$
only depends on $\gamma_k$, and it is straightforward to see that it is an
isomorphism. 
(The domain and target spaces are both real vector spaces of dimension
$n_k-1$ :
recall the normalization $\sum_{i=1}^{n_k}\gamma_{k,i}=1$.)
The proposition then follows by the implicit function theorem.
\end{proof}
\begin{remark}
$\gamma$ is a smooth function of the parameters $(\tau,a,b)$.
From now on the parameter $\tau$ replaces the parameter $t$.
\end{remark}
\subsection{The $B$-period problem for $\phi_1$ and $\phi_2$.}
In this section we solve the period problem
$$\Re\int_{B_{k,i}}\phi_1=\Re\int_{B_{k,i}}\phi_2=0,
\qquad k\in\Z,\,2\leq i\leq n_k.$$
This is equivalent to
$$\int_{B_{k,i}}g^{-1}\omega=\overline{\int_{B_{k,i}}g\omega},
\qquad k\in\Z,\,2\leq i\leq n_k.$$
We define for $k\in\Z$ and $2\leq i\leq n_k$
$$\boH_{k,i}= t\left(
\int_{B_{k,i}} g^{(-1)^k}\omega -
\overline{\int_{B_{k,i}} g^{(-1)^{k+1}}\omega}\right).$$
Let $\boH=(\boH_{k,i})_{k\in\Z,2\leq i\leq n_k}$.
We want to solve the equation $\boH=0$.
\begin{lemma}
\label{lemma-period-horizontal}
The function $\boH$ extends smoothly at $\tau=0$ to a smooth function
of all parameters (for the $\ell^{\infty}$ norms).
Moreover, when $\tau=0$ we have
$$\boH_{k,i}=
\int_{b_{k,1}}^{b_{k,i}}g_{k+1}^{-1}\omega_{k+1}
-\overline{\int_{a_{k,i}}^{a_{k,1}}g_k^{-1}\omega_k}.$$
\end{lemma}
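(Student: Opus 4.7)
The plan is to decompose the cycle $B_{k,i}$ into its four defining segments and analyze each of the two integrals in the definition of $\boH_{k,i}$ separately. The renormalization by $t$ is precisely designed so that on each sheet only one of the integrands survives in the limit: one checks directly that
\begin{equation*}
tg^{(-1)^k}|_{\C_k}=t^2 g_k,\quad tg^{(-1)^k}|_{\C_{k+1}}=g_{k+1}^{-1},\quad tg^{(-1)^{k+1}}|_{\C_k}=g_k^{-1},\quad tg^{(-1)^{k+1}}|_{\C_{k+1}}=t^2 g_{k+1}.
\end{equation*}
Hence $t\int_{B_{k,i}}g^{(-1)^k}\omega$ contributes $O(t^2)$ on the $\C_k$-portion and a dominant $\int\omega/g_{k+1}$ on the $\C_{k+1}$-portion, and vice versa for $t\int_{B_{k,i}}g^{(-1)^{k+1}}\omega$. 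This immediately explains the shape of the claimed limit.

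The dominant $\C_{k+1}$-portion of the first integral runs from the inner circle of the neck at $(k,1)$ through $b_{k,1}^0+\epsilon'$ and $b_{k,i}^0+\epsilon'$ to the inner circle of the neck at $(k,i)$. On the bulk piece of this path the integrand $\omega/g_{k+1}$ is regular (both $\omega$ and $g_{k+1}$ have simple poles with proportional leading terms at $b_{k,\cdot}$) and depends smoothly on all parameters by Proposition \ref{proposition-omega}. The delicate piece is the integration over the $\C_{k+1}$-halves of the two necks. I would handle these in the admissible coordinate $w_{k,1}$ with the identification $v_{k,1}w_{k,1}=t^2$, by writing $\omega=\sum_m d_m(t)\,w_{k,1}^m\,dw_{k,1}$ on the Laurent annulus. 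The prescribed $A$-period gives $d_{-1}(t)=\gamma_{k,1}$; matching the two Laurent expansions via the node identification, together with the uniform $L^\infty$ bound on $\omega$ on the bulk, forces $d_m(t)=O(t^{-2m-2})$ for $m\leq -2$; and for $m\geq 0$ the $d_m(t)$ depend smoothly on all parameters with $d_m(0)$ equal to the Taylor coefficients of $\omega_{k+1}$ at $b_{k,1}$ in coordinate $w_{k,1}$. A direct computation of $\int w_{k,1}\sum d_m w_{k,1}^m\,dw_{k,1}$ from $|w_{k,1}|=t$ to the fixed endpoint shows that the $m=-1$ term contributes $\gamma_{k,1}\,w_{k,1}(b_{k,1}^0+\epsilon')$ at $\tau=0$, the $m\geq 0$ terms reproduce the Taylor integral of the holomorphic part of $w_{k,1}\omega_{k+1}$ near $b_{k,1}$, and the $m\leq -2$ and logarithmic $m=0$ contributions give remainders of the form $t^\alpha(\log t)^\beta$ with $\alpha>0$. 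Their sum is exactly $\int_{b_{k,1}}^{b_{k,1}^0+\epsilon'}\omega_{k+1}/g_{k+1}$; combining with the bulk piece on $\C_{k+1}$ and the symmetric piece at $b_{k,i}$ yields the stated $\int_{b_{k,1}}^{b_{k,i}}g_{k+1}^{-1}\omega_{k+1}$. The symmetric analysis on $\C_k$ for the second integral gives the $\overline{\int_{a_{k,i}}^{a_{k,1}}g_k^{-1}\omega_k}$ term.

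The main obstacle is smoothness in $\tau$ rather than $t$: the remainders $t^\alpha(\log t)^\beta$ with $\alpha>0$ are continuous at $t=0$ but not $C^1$ in $t$, and this is precisely why the change of variable $t=e^{-1/\tau^2}$ from Proposition \ref{proposition-period-omega} is imposed, since under this change every such remainder becomes a smooth function of $\tau$ with all derivatives vanishing at $\tau=0$. Smoothness in the remaining parameters $(a,b,\alpha,\beta,\gamma)$ follows from Proposition \ref{proposition-omega} together with the smooth dependence of the admissible coordinates $v_{k,i},w_{k,i}$ on $(a,b,\alpha,\beta)$. Uniformity in $k$, needed because the target space is $\ell^\infty$, is guaranteed by the finiteness hypothesis \ref{hypothesis-finitude}: only finitely many local configurations occur, so every Laurent coefficient bound, every remainder estimate and every smoothness constant can be taken independent of $k$. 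This is also the point where the opening-nodes machinery of \cite{TT} must be invoked in a uniform way.
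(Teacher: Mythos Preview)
Your approach is correct and is essentially the paper's: decompose $B_{k,i}$ into its four arcs, dispose of the two fixed arcs by the smooth dependence of $\omega$ on $\Omega_{\epsilon'}$, and handle the two neck arcs via a Laurent expansion of $\omega$ in the admissible coordinate (the paper records this expansion as Lemma~\ref{lemma-appendix1} and the resulting neck integral as Lemma~\ref{lemma-appendix3}). The one simplification you miss is that $tg^{(-1)^k}=w_{k,i}$ on the \emph{entire} identified neck annulus (since $w_{k,i}=1/g_{k+1}$ and $g^{(-1)^k}=(tg_{k+1})^{-1}$ there), so the whole neck contribution is simply $\int_{a_{k,i}^0+\epsilon'}^{b_{k,i}^0+\epsilon'}w_{k,i}\,\omega$ with no need to split at a seam $|w_{k,1}|=t$; this also repairs the small slip that the ``$\C_k$-half'' of the neck is $O(t^2)$ --- it is only $O(t)$, because $g_k=1/v_{k,i}$ is unbounded there --- though this still vanishes at $\tau=0$ and does not affect the stated limit.
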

The proof of this lemma is technical
and is given in appendix \ref{appendix}.

\medskip

Assume that the parameters $\alpha$, $\beta$ and $\gamma$ are
determined as functions of $(\tau,a,b)$ by propositions
\ref{proposition-zeros} and \ref{proposition-period-omega}.
Then $\boH$ extends at $\tau=0$ to a smooth function
of $(\tau,a,b)$. Moreover, as $\alpha=\beta=\gamma$ when
$\tau=0$, we have $\omega_k=g_k dz$ so
$$\boH_{k,i}(0,a,b)=
b_{k,i}-b_{k,1}+\overline{a_{k,i}}-\overline{a_{k,1}}.$$
We normalize the parameter $b$ by requiring that
$b_{k,1}=0$ for all $k\in\Z$.
(This may be seen as a normalization of translation in $\C_k$.)
Let $\boH_k=(\boH_{k,i})_{2\leq i\leq n_k}$.
For each $k$, the partial differential of $\boH_k$ with respect to
$b$ only depends on $b_k$, and is easily seen to be injective, so 
is an isomorphism because the domain and range both have complex dimension
$n_k-1$.
Hence, by the finiteness hypothesis, the partial differential of $\boH$ with
respect to $b$ is an isomorphism from $\ell^{\infty}$ to $\ell^{\infty}$.
By the implicit function theorem, we get
\begin{proposition}
\label{proposition-period-horizontal}
Assume that the parameters $\alpha,\beta,\gamma$ are determined by proposition
\ref{proposition-zeros} and \ref{proposition-period-omega}.
For $(\tau,a)$ in a neighborhood of $(0,a^0)$, there exist values of the
parameter $b$, depending smoothly on $(\tau,a)$, such that
$\boH(\tau,a,b(\tau,a))=0$. Moreover, when $\tau=0$, we have
$$b_{k,i}=-\overline{a_{k,i}}+\overline{a_{k,1}}.$$
\end{proposition}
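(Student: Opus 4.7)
My plan is to apply the implicit function theorem to the map $\boH$ at $\tau = 0$, after fixing the translation ambiguity in each copy $\C_k$. The preceding Lemma \ref{lemma-period-horizontal} already supplies the delicate analytical step by extending $\boH$ smoothly across the degenerate parameter value $\tau = 0$. Once the parameters $\alpha, \beta, \gamma$ are substituted by the smooth functions of $(\tau, a, b)$ produced by Propositions \ref{proposition-zeros} and \ref{proposition-period-omega}, the composite $\boH(\tau, a, b)$ becomes a smooth map from a neighborhood of $(0, a^0, b^0)$ into $\ell^{\infty}$.

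The first concrete step is to evaluate $\boH$ at $\tau = 0$. Since $\alpha = \beta = \gamma$ there (Proposition \ref{proposition-zeros}), the explicit formula in Proposition \ref{proposition-omega} gives $\omega_k = g_k\, dz$ on each $\C_k$, so the quotients $g_k^{-1}\omega_k$ reduce to $dz$. Substituting into the boundary value supplied by Lemma \ref{lemma-period-horizontal} collapses everything to
\[ \boH_{k,i}(0, a, b) = b_{k,i} - b_{k,1} + \overline{a_{k,i}} - \overline{a_{k,1}}. \]
I next kill the translation freedom in each $\C_{k+1}$ by requiring $b_{k,1} = 0$ for every $k \in \Z$, so the free $b$-parameter is the sequence $(b_{k,i})_{k \in \Z,\, 2 \leq i \leq n_k}$. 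Under this normalization the equation $\boH(0, a, b) = 0$ has the unique closed-form solution $b_{k,i} = -\overline{a_{k,i}} + \overline{a_{k,1}}$, and a short check using the definition of $b^0$ and $a^0$ in terms of the configuration confirms that this is exactly $b^0$ when $a = a^0$, so the implicit function theorem is being applied at a consistent base point.

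Finally I would verify that the partial differential of $\boH$ in $b$ at $(0, a^0, b^0)$ is an isomorphism of $\ell^{\infty}$. The explicit formula makes it manifest that this differential has block diagonal structure: the $k$-th block depends only on $b_k = (b_{k,i})_{2 \leq i \leq n_k}$ and acts as the identity on $\C^{n_k - 1}$. Each block is therefore trivially invertible, and the boundedness of $(n_k)_{k \in \Z}$ (hypothesis 1 of Theorem \ref{th4}) ensures that the block inverses have operator norms uniformly bounded in $k$, so the global differential is an isomorphism of $\ell^{\infty}$. The implicit function theorem in Banach spaces then delivers $b$ as a smooth function of $(\tau, a)$ solving $\boH(\tau, a, b(\tau, a)) = 0$ in a neighborhood of $(0, a^0)$, with the stated value at $\tau = 0$.

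I do not anticipate any genuine obstacle here: Lemma \ref{lemma-period-horizontal} already absorbed the degeneracy of the $B$-periods at the necks, and the linearization at $\tau = 0$ turns out to be the identity, so the main work reduces to bookkeeping the normalization and invoking the finiteness hypothesis to upgrade the pointwise invertibility to invertibility on $\ell^{\infty}$.
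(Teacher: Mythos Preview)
Your proposal is correct and follows essentially the same route as the paper: use Lemma~\ref{lemma-period-horizontal} to extend $\boH$ smoothly to $\tau=0$, reduce $\boH_{k,i}(0,a,b)$ to $b_{k,i}-b_{k,1}+\overline{a_{k,i}}-\overline{a_{k,1}}$ via $\omega_k=g_k\,dz$, normalize $b_{k,1}=0$, observe the block-diagonal (identity) structure of the $b$-differential, and apply the implicit function theorem. The only cosmetic difference is that the paper invokes the finiteness hypothesis rather than the boundedness of $(n_k)$ for the uniform invertibility, but since each block is literally the identity this point is moot either way.
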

\subsection{The $A$-period problem for $\phi_1$ and $\phi_2$}
In this section we solve the period problem
$$\Re\int_{A_{k,i}}\phi_1=\Re\int_{A_{k,i}}\phi_2=0,
\qquad k\in\Z,\,1\leq i\leq n_k.$$
This is equivalent to
$$\int_{A_{k,i}}g^{-1}\omega=\overline{\int_{A_{k,i}} g\omega},
\qquad k\in\Z,\,1\leq i\leq n_k.$$
Recall that $\conj(z)=\overline{z}$ denotes the
 complex conjugation.
We define
$$\boF_{k,i}^-=\frac{-1}{t}\conj^k\left(\int_{A_{k,i}} g^{(-1)^k}\omega\right),$$
$$\boF_{k,i}^+=\frac{1}{t}\conj^{k+1}\left(\int_{A_{k,i}} g^{(-1)^{k+1}}\omega
\right),$$
$$\boF_{k,i}=\boF_{k,i}^- + \boF_{k,i}^+.$$
We want to solve the equations $\boF_{k,i}=0$ for all $k\in\Z$ and $1\leq i\leq n_k$.
This equation will give us the balancing equation $F_{k,i}=0$ of
section \ref{section-configurations}.
However, to be able to use the non-degeneracy hypothesis, we have to reformulate
this infinite system of equations in a slightly different way,
as we did for the balancing condition
$F_{k,i}=0$ in section \ref{section-configurations}.
Define
$$\boF_k^-=\sum_{i=1}^{n_k} \boF_{k,i}^-,$$
$$\boF_k^+=\sum_{i=1}^{n_k} \boF_{k,i}^+,$$
$$\boF_k=\sum_{i=1}^{n_k}\boF_{k,i}=\boF_k^-+\boF_k^+.$$
\begin{lemma}
It holds, independently of the values of the parameters,
$$\forall k\in\Z,\qquad \boF_k^-+\boF_{k-1}^+=0.$$
\end{lemma}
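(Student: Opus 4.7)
The plan is to combine the two sums into a single integral over a cycle that bounds a domain on $\CC_k$ on which $g^{(-1)^k}\omega$ is holomorphic, and then apply Cauchy's theorem.

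First I observe that both $\boF_k^-$ and $\boF_{k-1}^+$ involve the same form $g^{(-1)^k}\omega$ and the same conjugation $\conj^k$, so
$$\boF_k^- + \boF_{k-1}^+ =
\frac{1}{t}\conj^k\Bigl(
\sum_{i=1}^{n_{k-1}}\int_{A_{k-1,i}} g^{(-1)^k}\omega
-\sum_{i=1}^{n_k}\int_{A_{k,i}} g^{(-1)^k}\omega
\Bigr).$$
It suffices to show that the quantity in parentheses vanishes. On $\CC_k$ the definition of $g$ gives $g^{(-1)^k} = tg_k$, so the relevant form is the meromorphic $1$-form $tg_k\omega$ on $\CC_k$.

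Next I represent every $A$-cycle appearing above by a circle in $\C_k$. By construction, $A_{k,i}$ admits the representative $-C(a_{k,i},\epsilon)$ in $\C_k$, while $A_{k-1,i}$ admits the representative $C(b_{k-1,i},\epsilon)$ in $\C_k$. Consequently
$$\sum_{i=1}^{n_{k-1}}\int_{A_{k-1,i}} tg_k\omega -\sum_{i=1}^{n_k}\int_{A_{k,i}} tg_k\omega
=\sum_{i=1}^{n_{k-1}}\int_{C(b_{k-1,i},\epsilon)} tg_k\omega
+\sum_{i=1}^{n_k}\int_{C(a_{k,i},\epsilon)} tg_k\omega.$$
This last sum is exactly $-\int_{\partial\Omega} tg_k\omega$, where $\Omega \subset \CC_k$ is the complement of the closed disks $\overline{D(a_{k,i},\epsilon)}$ and $\overline{D(b_{k-1,i},\epsilon)}$; the sign reflects the fact that the boundary of $\Omega$, oriented so that $\Omega$ lies on its left, traces the small circles clockwise.

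Finally I check that $tg_k\omega$ is holomorphic on $\Omega$, including at $\infty_k$, so Cauchy's theorem forces $\int_{\partial\Omega} tg_k\omega = 0$. Indeed, $g_k$ is rational on $\CC_k$ with poles only at the points $a_{k,i}$, all of which lie inside the removed disks; $\omega$ is holomorphic on $\Sigma_t$, hence on the part of $\C_k$ that we kept. At $\infty_k$, $\omega$ is holomorphic (it is a holomorphic differential on $\Sigma_t$ and $\infty_k\in\Sigma_t$), and $g_k$ has a zero of order at least two there thanks to the normalization \eqref{eq-alphabeta}; so $tg_k\omega$ extends holomorphically through $\infty_k$. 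I do not expect any genuine obstacle here: the statement is essentially a residue computation, and the only mild subtlety is bookkeeping the orientations of the $A$-cycles and verifying holomorphicity at $\infty_k$, which is guaranteed by the planar end condition built into $\omega$ and the normalization of $\alpha,\beta$.
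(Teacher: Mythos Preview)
Your argument is correct and follows essentially the same route as the paper: express both sums as integrals of $tg_k\omega$ over circles in $\CC_k$, observe that $tg_k\omega$ is holomorphic at $\infty_k$ (since $g_k$ vanishes there and $\omega$ is holomorphic), and apply the residue theorem. One small slip: you write that ``$g_k$ is rational on $\CC_k$ with poles only at the points $a_{k,i}$,'' but $g_k$ also has simple poles at the points $b_{k-1,i}$; this does not affect your conclusion, since those points lie inside the removed disks $D(b_{k-1,i},\epsilon)$ as well, so $g_k$ is indeed holomorphic on $\Omega$.
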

\begin{proof}
We have, in $\C_k$, $g^{(-1)^k}=t g_k$. This function is
holomorphic at $\infty_k$. Hence, by the residue theorem,
$$-\sum_{i=1}^{n_k}\int_{A_{k,i}} g^{(-1)^k}\omega
+\sum_{i=1}^{n_{k-1}}\int_{A_{k-1,i}} g^{(-1)^k}\omega=0$$
so
$$t\conj^k(\boF_k^-)+t\conj^k(\boF_{k-1}^+)=0.$$
which proves the lemma.
\end{proof}

Let us write $\boG_k=-\boF_k^-$.
Then by the lemma,
$$\boF_k=\boF_k^- - \boF_{k+1}^- =\boG_{k+1}-\boG_k.$$
Solving $\boF_{k,i}=0$ for all $k\in\Z$ and
$1\leq i\leq n_k$ is then equivalent to solve for all $k\in\Z$,
$\boF_{k,i}=0$ for $2\leq i\leq n_k$ and $\boG_k=\boG_0$.
We write $\boF=(\boF_{k,i})_{k\in\Z,2\leq i\leq n_{k}}$
and $\boG=(\boG_k)_{k\in\Z}$.

Next we need to introduce the parameters $\ell_k$ and $u_{k,i}$ of section
\ref{section-configurations}
to make use of the non-degeneracy hypothesis. We make the change of parameter
$$a_{k,i}=(-1)^k \conj^k( \ell_k+u_{k,i}).$$
We define the parameter ${\bf U}$ by equation \eqref{u}.
\begin{proposition}
\label{proposition-balancing}
Assume that the parameters $\alpha$, $\beta$, $\gamma$ and
$b$ are determined by propositions \ref{proposition-zeros},
\ref{proposition-period-omega} and \ref{proposition-period-horizontal}.
The functions $\boF$ and $\boG$ extend at $\tau=0$ to smooth functions of
$(\tau,{\bf U})$ (for the $\ell^{\infty}$ norms).
Moreover, at $\tau=0$ we have
$$\boF_{k,i}(0,{\bf U})=4\pi {\rm i} F_{k,i}({\bf U}),$$
$$\boG_k(0,{\bf U})=4\pi {\rm i} G_k({\bf U})$$
where the functions $F_{k,i}$ and $G_k$ are as in section
\ref{section-configurations}.
Hence, if the configuration is balanced and non-degenerate in the sense of definition \ref{def.iso},
for $\tau$ in a neighborhood of $0$, there exist values of
the parameter ${\bf U}$, depending smoothly on $\tau$ for the $\ell^{\infty}$
norm, such that $\boF({\bf U}(\tau))=\boF({\bf U}^0)=0$ and
$\boG({\bf U}(\tau))=\boG({\bf U}^0)=\mbox{constant}$, so our period problem is solved.
\end{proposition}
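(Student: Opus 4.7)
My plan is to prove the proposition in three stages: (a) establish smooth extension of $\boF$ and $\boG$ at $\tau = 0$; (b) identify the values at $\tau = 0$ with the forces $F_{k,i}$ and $G_k$ from Section \ref{section-configurations}; (c) apply the implicit function theorem. For (a), the key identity is that $g^{(-1)^k} = tg_k$ holds on $\C_k$ for every $k$, so viewing $A_{k,i}$ as the cycle $-C(a_{k,i}, \epsilon)$ in $\C_k$ gives
\[
\int_{A_{k,i}} g^{(-1)^k}\omega \;=\; -2\pi i\, t \cdot \Res_{a_{k,i}}(g_k\omega).
\]
The explicit factor of $t$ is cancelled by the $1/t$ in the definition of $\boF_{k,i}^-$, and smoothness of $\omega$ in all parameters (Theorem 4 of \cite{TT}) together with the finiteness hypothesis yields smoothness of $\boF_{k,i}^-$ for the $\ell^{\infty}$ norm. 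The analogous computation on the $\C_{k+1}$ side handles $\boF_{k,i}^+$.

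For (b), at $\tau = 0$ Propositions \ref{proposition-omega}--\ref{proposition-period-omega} give $\omega = \omega_k$ on $\C_k$ and $\alpha_{k,i} = \beta_{k,i} = \gamma_{k,i} = c_k = 1/n_k$. A direct residue computation for the double pole of $g_k\omega_k$ at $a_{k,i}$ yields
\[
\Res_{a_{k,i}}(g_k\omega_k) \;=\; 2\biggl(\sum_{j\neq i}\frac{c_k^2}{a_{k,i}-a_{k,j}} - \sum_{j=1}^{n_{k-1}}\frac{c_k c_{k-1}}{a_{k,i}-b_{k-1,j}}\biggr),
\]
and an analogous formula holds for $g_{k+1}\omega_{k+1}$ at $b_{k,i}$. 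The conjugation factor $\conj^k$ in $\boF_{k,i}^-$ was engineered precisely to undo the twist in the substitutions $a_{k,i} = (-1)^k\conj^k(\ell_k + u_{k,i})$ and $b_{k,i} = (-1)^{k+1}\conj^{k+1}(u_{k,i})$: the identities
\[
\conj^k\!\biggl(\frac{1}{a_{k,i}-a_{k,j}}\biggr) = \frac{(-1)^k}{p_{k,i}-p_{k,j}}, \qquad \conj^k(2\pi i) = (-1)^k\cdot 2\pi i,
\]
together with their analogues for $b$, combine to give
\[
\boF_{k,i}^-(0,{\bf U}) = 4\pi i\biggl[\sum_{j\neq i}\frac{c_k^2}{p_{k,i}-p_{k,j}} - \sum_j \frac{c_k c_{k-1}}{p_{k,i}-p_{k-1,j}}\biggr].
\]
Adding the parallel expression for $\boF_{k,i}^+$ recovers $4\pi i F_{k,i}({\bf U})$, while summing $\boF_{k,i}^-$ over $i$ (the antisymmetric same-plane term $\sum_{j\neq i} c_k^2/(p_{k,i}-p_{k,j})$ cancels) gives $\boG_k(0,{\bf U}) = 4\pi i G_k({\bf U})$.

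Stage (c) is then immediate: by non-degeneracy, $d\tilde{\bf F}({\bf U}^0)$ is an isomorphism of $\ell^{\infty}(\Z)$, so the partial differential of $(\boG, \boF)$ with respect to ${\bf U}$ at $(0, {\bf U}^0)$ equals $4\pi i\cdot d\tilde{\bf F}({\bf U}^0)$ and is also an isomorphism, and the Banach-space implicit function theorem then produces a smooth family ${\bf U}(\tau)$ solving the full system. I expect the principal obstacle to lie in stage (b): the careful bookkeeping of signs and conjugations is subtle, and rests on the conjugation-twisted identifications chosen in Section \ref{section-notations} precisely so that the balancing formulas \eqref{forces} emerge with the right signs rather than their complex conjugates.
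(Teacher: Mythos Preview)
Your proof is correct and follows essentially the same route as the paper's: both rewrite $\boF_{k,i}^{\pm}$ as contour integrals of $g_k\omega$ and $g_{k+1}\omega$ over small circles in $\C_k$ and $\C_{k+1}$, invoke smoothness of $\omega$ together with the finiteness hypothesis for the extension, compute the residues of $g_k^2\,dz$ at $\tau=0$ with the same conjugation bookkeeping, and finish with the implicit function theorem. The only cosmetic differences are that the paper keeps the contour-integral form $\int_{C(a_{k,i},\epsilon)} g_k\omega$ rather than your residue notation (for $t>0$ the point $a_{k,i}$ has been excised from $\Sigma_t$, so strictly speaking there is no residue there), and it writes the final formulas in the variables $u_{k,i},\ell_k$ rather than $p_{k,i}$.
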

\begin{proof}
We have $g^{(-1)^k}=t g_k$ in $\C_k$, and $A_{k,i}$ is homologous
to the circle $C(a_{k,i},\epsilon)$ in $\C_k$
with the negative orientation, so
$$\boF_{k,i}^-=\conj^k\left(\int_{C(a_{k,i},\epsilon)}g_k\omega\right).$$
We see by this formula that $\boF_{k,i}^-$ extends at $\tau=0$.
Now $\omega$ depends smoothly on all parameters (for the norm $||\omega||_{\infty}$
defined in section \ref{section-omega}), and the function 
equal to
$g_k$ in each $\C_k$ also depends smoothly on all parameters (using the
finiteness hypothesis). So $(\boF_{k,i}^-)_{k\in\Z,1\leq i\leq n_k}$ is
a smooth function of all parameters by composition with a bounded linear
operator.

In the same way, we have $g^{(-1)^{k+1}}=t g_{k+1}$ in $\C_{k+1}$
and $A_{k,i}$ is homologous to the circle $C(b_{k,i},\epsilon)$ in $\C_{k+1}$,
so
$$\boF_{k,i}^+=\conj^{k+1}\left(\int_{C(b_{k,i},\epsilon)}g_{k+1}\omega\right).$$
It follows that
$(\boF_{k,i}^+)_{k\in\Z,1\leq i\leq n_k}$ is a smooth function of all parameters.
Hence $\boF$ and $\boG$ are smooth functions of the parameters.

Next assume that $\tau=0$
and $\alpha$, $\beta$, $\gamma$ and $b$ are determined by
propositions \ref{proposition-zeros},
\ref{proposition-period-omega} and \ref{proposition-period-horizontal}.
Then
$$\alpha_{k,i}=\beta_{k,i} =\gamma_{k,i}=\frac{1}{n_k}=c_k,$$
and $\omega=g_k dz$ in $\C_k$. Also
$$b_{k,i}=(-1)^{k+1}\conj^{k+1}(u_{k,i}).$$
Then we compute $\boF_{k,i}^-$ and $\boF_{k,i}^+$ as residues :
\begin{eqnarray*}
\boF_{k,i}^- &=& \conj^k\left(2\pi {\rm i}
\Res_{a_{k,i}} (g_k)^2\right)\\
&=& 2\pi {\rm i} (-1)^k \conj^k\left( 2\sum_{j\neq i}\frac{c_k^2}{a_{k,i}-a_{k,j}}
-2\sum_{j=1}^{n_{k-1}}\frac{c_k c_{k-1}}{a_{k,i}-b_{k-1,j}}\right)
\\
&=& 4\pi {\rm i}\left(\sum_{j\neq i}\frac{c_k^2}{u_{k,i}-u_{k,j}}
-\sum_{j=1}^{n_{k-1}} \frac{c_k c_{k-1}}{\ell_k+u_{k,i}-u_{k-1,j}}\right)
\end{eqnarray*}
\begin{eqnarray*}
\boF_{k,i}^+ &=& \conj^{k+1}\left(2\pi {\rm i}\Res_{b_{k,i}} (g_{k+1})^2\right)\\
&=& 2\pi {\rm i} (-1)^{k+1} \conj^{k+1}\left( 2\sum_{j\neq i}\frac{c_k^2}{b_{k,i}-b_{k,j}}
-2\sum_{j=1}^{n_{k+1}}\frac{c_k c_{k+1}}{b_{k,i}-a_{k+1,j}}\right)
\\
&=& 4\pi {\rm i}\left(\sum_{j\neq i}\frac{c_k^2}{u_{k,i}-u_{k,j}}
-\sum_{j=1}^{n_{k+1}} \frac{c_k c_{k+1}}{u_{k,i}-\ell_{k+1}-u_{k+1,j}}\right)
\end{eqnarray*}
This gives $\boF_{k,i}=4\pi i F_{k,i}$.
Regarding $\boG_k$, we have
$$\sum_{i=1}^{n_k}\sum_{j=1\atop j\neq i}^{n_k} \frac{1}{u_{k,i}-u_{k,j}}=0$$
so $\boG_k=4\pi i G_k$.
\end{proof}
\subsection{Embeddedness}
\label{section-embedded}
At this point we have constructed a one parameter family of
minimal immersion $\psi_{t}:\Sigma_{t}\to\R^3$
for $t>0$ small enough.
(We switch back to the parameter $t$.)
Let $M_t=\psi_{t}(\Sigma_t)$.
\begin{proposition}
\label{proposition-embedded}
$M_t$ is an embedded minimal surface.
\end{proposition}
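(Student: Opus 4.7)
The plan is to decompose $\Sigma_t$ into \emph{plane pieces} (for each $k\in\Z$, the domain $\Omega_{k,\epsilon}$ together with a neighborhood of $\infty_k$ in $\CC_k$) and \emph{neck pieces} (one annulus around each opened node), and to show that each piece is an embedded $C^1$-perturbation of a standard model---horizontal plane, or vertical catenoid---while distinct pieces have disjoint images. Uniformity of every estimate in $k\in\Z$ follows from the finiteness hypothesis \ref{hypothesis-finitude}.

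On $\Omega_{k,\epsilon}$, proposition \ref{proposition-omega} together with $\alpha^0=\beta^0=\gamma^0$ gives $\omega=g_k\,dz$ at $t=0$, while $g=(tg_k)^{\pm 1}$ is of order $t^{\pm 1}$. The leading terms of the Weierstrass data are therefore
\begin{equation*}
\phi_1\sim\tfrac{dz}{2t},\qquad \phi_2\sim\pm\tfrac{{\rm i}\,dz}{2t},\qquad \phi_3=\omega,
\end{equation*}
with the sign determined by the parity of $k$. Integrating, the horizontal projection $(x_1,x_2)$ is a $C^1$-small perturbation of a global diffeomorphism of $\Omega_{k,\epsilon}$ (extended to $\infty_k$) onto its image, while $x_3$ stays in a fixed interval $[h_k-C,h_k+C]$ of uniform thickness around the height $h_k$ of the $k$-th sheet. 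Hence each plane piece is a horizontal graph, in particular embedded, and its end at $\infty_k$ is asymptotic to the horizontal plane $\{x_3=h_k\}$. This end is an embedded planar end because $g$ has an order-$2$ zero or pole there (section \ref{section-gauss-map}, using hypothesis 3 of theorem \ref{th4}) and $\omega$ does not vanish there (proposition \ref{proposition-zeros}).

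For the neck around the node $(a_{k,i},b_{k,i})$, I would use the rescaled coordinate $\zeta=v_{k,i}/t$, in which the neck becomes the annulus $t/\rho\le|\zeta|\le\rho/t$ and $g=\zeta^{\mp 1}$ on the nose. At the central values of the parameters one finds $\omega=-\tfrac{1}{n_k}\tfrac{d\zeta}{\zeta}$, and a direct integration of the Weierstrass formula yields the standard vertical catenoid $\sqrt{x_1^2+x_2^2}=\tfrac{1}{n_k}\cosh(n_k x_3)$, translated so that its waist sits over the image of $p_{k,i}^0$. Smooth dependence of $\omega$ and $g$ on the parameters, together with the boundedness of $(n_k)_{k\in\Z}$, makes this a uniform $C^1$-small perturbation of the model; each neck is therefore embedded, its vertical span lies in $[h_k-C,h_{k+1}+C]$ and its horizontal projection remains within distance $O(1)$ of the waist.

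Global embeddedness then follows from separation estimates. By lemma \ref{lemma-period-omega} one has $h_{k+1}-h_k=-\tfrac{2}{n_k}\log t+O(1)\to+\infty$ as $t\to 0$, so for $t$ small the slabs $\{|x_3-h_k|\le C\}$ carrying the plane pieces are pairwise disjoint, and a neck at level $k$ only touches the two plane pieces to which it is glued by construction. Within a single level, distinct necks are centered over distinct points $p_{k,i}^0$ whose horizontal images in $\R^3$ are separated by at least $c/t$, by the finiteness hypothesis applied to $|p_{k,i}^0-p_{k,j}^0|$, while each neck has horizontal radius $O(1)$; similarly, necks at adjacent levels $(k,i)$ and $(k+1,j)$ do not meet on the common sheet $\CC_{k+1}$ since $|b_{k,i}^0-a_{k+1,j}^0|\ge c>0$, again uniformly. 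The main obstacle, as throughout the paper, is precisely this uniformity in $k\in\Z$: each local estimate (graph over a plane, catenoid in the neck, horizontal/vertical separation) is classical, but we need constants independent of $k$, and this is exactly what the finiteness hypothesis provides. With these uniform bounds, combining the three local embeddedness statements with the separation estimates yields injectivity of $\psi_t$ for all sufficiently small $t>0$.
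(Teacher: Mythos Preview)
Your overall architecture---decompose into ``plane pieces'' and ``neck pieces'', show each is embedded, then show separation---matches the paper's, and your treatment of the plane pieces (horizontal rescaling by $t$, convergence to the graph of the explicit function built from the $\log|z-a_{k,i}^0|$ and $\log|z-b_{k-1,i}^0|$) is essentially the paper's argument. The gap is in the neck analysis.

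You take the neck to be the full annulus $t/\rho\le|\zeta|\le\rho/t$ and assert two things: that the immersion is a uniform $C^1$-small perturbation of the model catenoid there, and that ``its horizontal projection remains within distance $O(1)$ of the waist''. Neither holds on this growing annulus. Write $\omega$ in the coordinate $v=t\zeta$ using lemma~\ref{lemma-appendix1}: besides the leading $-\gamma_{k,i}\,d\zeta/\zeta$ you get, for instance, the $n=0$ term $r\,c_{k,i,0}^{+}\,t\,d\zeta$. Near $|\zeta|=\rho/t$ this correction contributes $O(1)$ to $\phi_1/d\zeta$ (since $g^{-1}\omega\sim\zeta\cdot t\,d\zeta$), of the \emph{same} order as the catenoidal term $-\tfrac{1}{2}\gamma_{k,i}\,d\zeta$; so the perturbation is not small at the boundary. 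And the model catenoid itself, over a height interval of length $\sim\tfrac{2}{n_k}|\log t|$, already reaches horizontal radius $\sim\tfrac{1}{2n_k t}$, not $O(1)$. Since the horizontal separation between distinct neck centers is also of order $1/t$, the disjointness of the necks is a genuine competition of constants, not of orders, and your ``$O(1)$ versus $c/t$'' argument does not settle it.

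The paper avoids both problems by cutting differently: it keeps only the part of each $\psi_t(\Omega_{k,\epsilon})$ inside a slab of \emph{fixed} height $2\eta$ (where the graph argument applies), and treats everything between consecutive slabs as $n_k$ minimal annuli bounded by convex curves in horizontal planes. Shiffman's theorem then gives embeddedness and horizontal convexity of each annulus without any perturbation estimate. For disjointness, an extra small parameter $\epsilon'$ is introduced: the annuli are bounded by the images of $C(a_{k,i}^0,\epsilon')$ and $C(b_{k,i}^0,\epsilon')$, which lie in vertical cylinders of radius $\sim 2\epsilon'/t$ by lemma~\ref{lemma-appendix3}, while the cylinder axes are $\ge c/t$ apart; taking $\epsilon'$ small enough makes the cylinders disjoint, and the convex hull property traps each annulus in its cylinder. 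If you want to rescue your catenoid-perturbation route, you would need an analogous adjustable cutoff (restrict the neck to $|\zeta|\le\delta/t$ for a small $\delta$, say) and then argue carefully that the remaining collar is handled by the graph piece---but as written the argument does not close.
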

\begin{proof}
Fix a complex number $O$ and for $k\in\Z$,
let $O_k$ be the point $z=O$ in $\C_k$.
By the finiteness hypothesis, we may
choose $O$ so that $O_k\in\Omega_{k,\epsilon}$
for all $k\in\Z$.
Let $\varphi_t:\R^3\to\R^3$ be the affine transformation
$(x_1,x_2,x_3)\mapsto (2tx_1,2tx_2,x_3)$.
Define
$$\psi_{k,t}(z)=\varphi_t(\psi_t(z)-\psi_t(O_k)):
\Omega_{k,\epsilon}\to\R^3.$$
If $k$ is even, then $g=t g_k$ in $\C_k$ so
$$\psi_{k,t}(z)=\Re\int_{O}^z \left(
(g_k^{-1}-t^2 g_k)\omega,
{\rm i}(g_k^{-1}+t^2 g_k)\omega,
\omega\right).$$
This extends (as a smooth function of $\tau$) at $t=0$, with
value
$$\psi_{k,0}(z)=\Re\int_{O}^z\left(dz,{\rm i}dz,\omega_k\right)
=\left(\Re(z),-\Im(z),h_k(z)-h_k(O)\right)$$
where the function $h_k$ is defined by
$$h_k(z)=\sum_{i=1}^{n_{k-1}}c_{k-1}\log|z-b_{k-1,i}^0|
-\sum_{i=1}^{n_k}c_k \log|z-a_{k,i}^0|.$$
Hence the image $\psi_{k,t}(\Omega_{k,\epsilon})$ converges
to the graph of $z\mapsto h_k(\overline{z})-h_k(\overline{O})$
on $\Omega_{k,\epsilon}$,
so it is embedded for $t$ small enough.
When $k$ is odd, we have by similar computations that
$\psi_{k,t}(\Omega_{k,\epsilon})$ converges to
the graph of $z\mapsto h_k(-z)-h_k(-O)$.

\medskip

Observe that the function $h_k$ is bounded on $\Omega_k$.
Provided $\epsilon$ is small enough, we can find a number
$\eta$ large enough such that the level line
 $h_k=\eta$ consists of
$n_k$ closed convex curves around the points $a_{k,i}^0$,
$1\leq i\leq n_k$, and the level line $h_k=-\eta$
consists of $n_{k-1}$ closed convex curves around the points
$b_{k-1,i}^0$, $1\leq i\leq n_{k-1}$.

\medskip

Back to our minimal immersion $\psi_t$, let
$M_{k,t}$ be the intersection of the image
$\psi_t(\Omega_{k,\epsilon})$
with the slab $\psi_t(O_k)-\eta< x_3<\psi_t(O_k)+\eta$.
By what we have seen, for $t$ small enough, each $M_{k,t}$
is embedded, and its boundary consists of $n_k$ closed
horizontal convex curves on the top and $n_{k-1}$
closed convex curves on the bottom.
By lemma \ref{lemma-appendix2}, $\int_{O_k}^{O_{k+1}}\omega\sim -2c_k \log t$,
so we see that $M_{k+1,t}$ lies strictly above $M_{k,t}$.

\medskip

The intersection of $M_t$
with the slab $\psi_t(O_k)+\eta<x_3<\psi_t(O_{k+1})-\eta$ is
the union of $n_k$ minimal annuli. Each annulus is bounded
by convex curves in horizontal planes, so is foliated
by horizontal convex curves by a theorem of Schiffman
\cite{S}, hence embedded.

\medskip

It remains to see that these $n_k$ annuli are disjoint.
Consider one of these annuli.
There exists $i$, $1\leq i\leq n_k$ such that
our annulus
is included in the image of the annulus bounded by the circles
$C(a_{k,i}^0,\epsilon')$ and $C(b_{k,i}^0,\epsilon')$
(provided we take $\eta$ large enough).
The image of these circles are close to circles of radius
$\frac{\epsilon'}{2t}$.
By lemma \ref{lemma-appendix3},
$t\int_{a_{k,i}^0+\epsilon'}^{b_{k,i}^0+\epsilon'} g^{\pm 1}\omega$
extends smoothly at $t=0$ (as a smooth function of $\tau$),
with value $\pm \epsilon'$.
Hence the boundary circles are inside a vertical cylinder of
radius $\frac{2\epsilon'}{t}$ for $t$ small enough.
By the convex hull property of minimal surfaces,
the annulus is inside this cylinder.

\medskip

Now from our analysis of $\psi_{k,t}$,
the axes of these cylinders are separated by a distance
greater than $\frac{c}{t}$ for some uniform positive number $c$.
Hence these cylinders are disjoint provided we take $\epsilon'>0$
small enough.
This proves that the annuli are disjoint, so $M_t$ is embedded.
\end{proof}
\appendix
\section{Proof of lemmas \ref{lemma-period-omega} and 
\ref{lemma-period-horizontal}}
\label{appendix}
We start by proving lemma \ref{lemma-period-omega}.
The period of $\omega$ on $B_{k,i}$ has four terms corresponding to the
four paths in the definition of $B_{k,i}$.
The first term is easily dealt with.
Indeed, the path from $a_{k,i}^0+\epsilon'$ to $a_{k,1}^0+\epsilon'$
is fixed and may be chosen in the domain $\Omega_{k,\epsilon'}$.
The restriction of $\omega$ to $\Omega_{\epsilon'}$ depends smoothly on
all parameters, and we compose it with the linear operator
$\omega\mapsto(\int_{a_{k,i}^0+\epsilon'}^{a_{k,1}^0+\epsilon'}\omega)_{k\in\Z,
2\leq i\leq n_k}$ which is bounded from $L^{\infty}(\Omega_{\epsilon'})$
to $\ell^{\infty}$ hence smooth.
We treat the third term, where the integral along a path
joining $b_{k,1}^0 +\epsilon'$ to $b_{k,i}^0+\epsilon'$
appears, in the same way.

To handle the second and fourth terms, we expand $\omega$ in Laurent series
and estimate carefully its coefficients.
First let us  define various constants and in particular
explain how we choose $\epsilon'$ in the definition of
the cycle $B_{k,i}$.
Remember that we have fixed a small number $\epsilon>0$.
We choose a constant $r$ independent of $k$
such that for all $k\in\Z$, $|g_k|\leq \frac{r}{2}$
in $\Omega_{k,\epsilon}$.
We choose a small number $\epsilon'>0$ independent of $k$
such that for all $k\in\Z$ and
$1\leq i\leq n_k$,
$|g_k|\geq 2r$ in the disk $D(a_{k,i},2\epsilon')$
and $|g_{k+1}|\geq 2r$ in the disk $D(b_{k,i},2\epsilon')$.
If $||a-a^0||_{\infty}\leq\epsilon'$ and $||b-b^0||_{\infty}\leq\epsilon'$, we
have $|g_k|\geq 2r$ in the disk $D(a_{k,i}^0,\epsilon')$
and $|g_{k+1}|\geq 2r$ in the disk $D(b_{k,i}^0,\epsilon')$.
Finally, we choose a constant $r'$ independent of $k$ such that
for all $k\in\Z$,
$|g_k|\leq \frac{r'}{2}$ in $\Omega_{k,\epsilon'}$.
All this is possible thanks to the finiteness hypothesis.
\begin{lemma}
\label{lemma-appendix1}
Let $v=v_{k,i}$. We have, in the annulus
$\frac{t^2}{\rho}\leq |v|\leq\rho$,
$$\omega=-\gamma_{k,i}\frac{dv}{v}+\sum_{n\geq 0}r^{n+1} c_{k,i,n}^+ v^n dv
+\sum_{n\geq 2} (rt^2)^{n-1} c_{k,i,n}^- \frac{dv}{v^n}.$$
The coefficients $c_{k,i,n}^{\pm}$ are bounded.
More precisely,
let $c^+=(c_{k,i,n}^+)_{k\in\Z,1\leq i\leq n_k,n\geq 0}$ and
$c^-=(c_{k,i,n}^-)_{k\in\Z,1\leq i\leq n_k,n\geq 2}$.
Then $c^+$ and $c^-$ are in $\ell^{\infty}$ and are smooth functions of all
parameters.
\end{lemma}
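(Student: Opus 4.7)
The idea is that, in the annulus $t^{2}/\rho\leq |v|\leq\rho$, the Laurent expansion of $\omega$ splits naturally into three parts: the logarithmic term $-\gamma_{k,i}\,dv/v$ (whose coefficient is the residue of $\omega$ at $a_{k,i}$, fixed by the prescribed $A_{k,i}$-period, taking into account that $A_{k,i}$ is homologous to $C(a_{k,i},\epsilon)$ with the opposite orientation), a power series in $v$ representing the holomorphic behaviour of $\omega$ at $a_{k,i}$ viewed from the $\C_{k}$ side, and a power series in $1/v$ of order $\geq 2$ which, via the neck identification $vw=t^{2}$, corresponds to the holomorphic behaviour of $\omega$ at $b_{k,i}$ viewed from the $\C_{k+1}$ side. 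Concretely, on the $\C_{k+1}$ side the Laurent expansion of $\omega$ in $w=w_{k,i}$ has a $\gamma_{k,i}\,dw/w$ term together with regular and singular parts; the substitution $w=t^{2}/v$ sends a term $\tilde{a}_{m}w^{m}dw$ with $m\geq 0$ to $-\tilde{a}_{m}t^{2(m+1)}v^{-(m+2)}dv$, which after relabeling $n=m+2$ is the source of the scaling $(rt^{2})^{n-1}$ in the statement, while the $\gamma_{k,i}\,dw/w$ term accounts for the $-\gamma_{k,i}\,dv/v$ contribution.

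For the positive-power coefficients I would use the Cauchy formula
$$r^{n+1}\,c_{k,i,n}^{+}=\frac{1}{2\pi{\rm i}}\oint_{|v|=\rho}\frac{\omega}{v^{n+1}}.$$
Provided $\epsilon$ is chosen small enough, the circle $|v_{k,i}|=\rho$ lies in $\Omega_{k,\epsilon}$, since near $a_{k,i}$ one has $v\sim -(z-a_{k,i})/\alpha_{k,i}$ with $\alpha_{k,i}^{0}=1/n_{k}$ bounded away from $0$ by the finiteness hypothesis. On that circle, $|\omega/dv|$ is bounded by a uniform constant $M$ (combining Proposition \ref{proposition-omega} with the uniform bound on $|dz/dv|$), and $|g_{k}|=1/\rho\leq r/2$ by the choice of $r$, so $r\rho\geq 2$. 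The standard Cauchy estimate then yields $|c_{k,i,n}^{+}|\leq (M/r)(r\rho)^{-n}\leq M/r$, uniformly in $k$, $i$ and $n$.

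For the negative-power coefficients the analogous Cauchy estimate on the circle $|w|=\rho$ in $\C_{k+1}$ gives $|\tilde{a}_{m}|\leq \tilde{M}\rho^{-m}$. Setting $n=m+2$, multiplying by $t^{2(n-1)}$ and dividing by $(rt^{2})^{n-1}$ yields
$$|c_{k,i,n}^{-}|\leq \tilde{M}\,\rho^{2-n}/r^{n-1}=\tilde{M}r\rho^{2}(r\rho)^{-n}\leq \tilde{M}r\rho^{2},$$
again uniformly in $k$, $i$ and $n$. Note that no contribution from negative $w$-powers appears here: those correspond to positive $v$-powers, which have already been accounted for by the $\C_k$-side Cauchy estimate, so there is no double counting.

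Finally, each coefficient is a bounded linear functional of $\omega$ evaluated on a fixed contour sitting inside $\Omega_{\epsilon}$, so the smoothness of $c^{\pm}$ as $\ell^{\infty}$-valued functions of the six parameters follows from the smoothness of $\omega$ in the $L^{\infty}(\Omega_{\epsilon})$ norm (Proposition \ref{proposition-omega}) combined with the uniform bounds just established. The main subtlety I foresee is ensuring that the uniformity in $(k,i,n)$ really rests on a finite amount of data: this is where the finiteness hypothesis is crucial, allowing $\rho$, $r$, $\epsilon$, $M$ and $\tilde{M}$ to be chosen once and for all; a secondary care is needed in tracking the correct exponent $n-1$ (rather than $n$ or $n+1$) in $(rt^{2})^{n-1}$, which comes from the two $t^{2}$-factors produced by $dw$ and $w^{m}$ combined with the shift $n=m+2$.
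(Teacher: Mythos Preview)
Your overall approach---Laurent expansion in $v=v_{k,i}$, Cauchy estimates on the $\C_k$ side for the nonnegative powers and on the $\C_{k+1}$ side (via $w=t^2/v$) for the powers $\leq -2$, with the $-\gamma_{k,i}\,dv/v$ term identified as the residue---is exactly the paper's strategy, and your exponent bookkeeping for the $(rt^2)^{n-1}$ factor is correct.

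There is, however, a genuine gap in your smoothness argument. You assert that ``each coefficient is a bounded linear functional of $\omega$ evaluated on a fixed contour,'' but neither clause is true as written. The circle $|v_{k,i}|=\rho$ moves with the parameters $(a,b,\alpha,\beta)$, and even after deforming to a fixed circle in the $z$-plane the integrand $\omega/v_{k,i}^{\,n+1}=\omega\,g_k^{\,n+1}$ still carries an explicit dependence on $(a,b,\alpha,\beta)$ through $g_k$. Consequently, smoothness of $\omega$ alone does not yield smoothness of $c^{\pm}$ as $\ell^{\infty}$-valued maps; uniform boundedness of the values says nothing about uniform boundedness of the derivatives in $(k,i,n)$.

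The paper closes this gap as follows: deform to the fixed circle $C(a_{k,i}^0,\epsilon)$ and write
\[
c_{k,i,n}^{+}=\frac{1}{2\pi{\rm i}}\int_{C(a_{k,i}^0,\epsilon)}\omega\,(g_k/r)^{n+1}.
\]
Both $\omega$ and $g_k/r$ depend smoothly on parameters as elements of $L^{\infty}(\Omega_{\epsilon})$, with $g_k/r$ in the open unit ball. Since $L^{\infty}(\Omega_{\epsilon})$ is a Banach algebra, the map $x\mapsto(x^{n})_{n\geq 0}$ is smooth from the open unit ball into the sup-normed sequence space; composing with pointwise multiplication by $\omega$ and then with the (genuinely fixed, uniformly bounded) integration functionals gives smoothness of $c^{+}$ in $\ell^{\infty}$. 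The argument for $c^{-}$ is symmetric, using $g_{k+1}/r$ on $C(b_{k,i}^0,\epsilon)$. You should either reproduce this Banach-algebra step or explicitly differentiate and control the extra terms coming from $\partial g_k$, checking that factors like $(n+1)(1/2)^{n}$ remain bounded uniformly in $n$.
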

\begin{proof}
Using $v=v_{k,i}$ as a coordinate, we can write the Laurent series of $\omega$ 
in the annulus $\frac{t^2}{\rho}\leq |v|\leq\rho$ as
$$\omega=\sum_{n\in\Z} c_{k,i,n}v^n dv.$$
The coefficient $c_{k,i,n}$ is given by
$$c_{k,i,n}=
\frac{1}{2\pi {\rm i}}
\int_{\partial V_{k,i}}\frac{\omega}{v^{n+1}}
= \frac{1}{2\pi {\rm i}}\int_{\partial V_{k,i}}\omega g_k^{n+1}.$$
In particular $c_{k,i,-1}=-\gamma_{k,i}$.
For $n\geq 0$, we write $c_{k,i,n}=r^{n+1} c_{k,i,n}^+$ with
$$c_{k,i,n}^+=\frac{1}{2\pi {\rm i}}
\int_{C(a_{k,i}^0,\epsilon)} \omega \left(\frac{g_k}{r}\right)^{n+1}.$$
We have used that $\partial V_{k,i}$ is homologous to the circle $C(a_{k,i}^0,\epsilon)$.
We have $|\frac{g_k}{r}|\leq \frac{1}{2}$ in $\Omega_{k,\epsilon}$
so $(c_{k,i,n}^+)_{n\in\N}$ is bounded.
The function equal to $\frac{g_k}{r}$
in each $\Omega_{k,\epsilon}$ is in the open
unit ball of $L^{\infty}(\Omega_{\epsilon})$ and depends smoothly on
parameters.
Recall that $L^{\infty}(\Omega_{\epsilon})$ is a Banach algebra for the
pointwise product.
If $\boA$ is a Banach algebra,
the map $x\mapsto (x^n)_{n\in\N}$ is smooth from the open
unit ball of $\boA$ to $\boA^{\N}$ with the sup norm
(an easy exercice).
From this and the fact that $\omega$, restricted to $\Omega_{\epsilon}$,
depends smoothly on parameters, and composition with a bounded linear
operator, we conclude that $c^+$ depends smoothly on all parameters.

For $n\leq -2$ we write $m=-n\geq 2$
and $c_{k,i,-m}=(rt^2)^{m-1} c_{k,i,m}^-$ with
\begin{eqnarray*}
c_{k,i,m}^-&=&\frac{1}{2\pi {\rm i}}\int_{\partial V_{k,i}}
\omega\left(\frac{v_{k,i}}{rt^2}\right)^{m-1}
=\frac{-1}{2\pi {\rm i}}
\int_{\partial W_{k,i}} \omega \left(\frac{1}{r w_{k,i}}
\right)^{m-1}\\
&=&\frac{-1}{2\pi {\rm i}}\int_{C(b_{k,i}^0,\epsilon)}
\omega\left(\frac{g_{k+1}}{r}\right)^{m-1}.
\end{eqnarray*}
We have used the fact that $v_{k,i}w_{k,i}=t^2$ and
$\partial V_{k,i}$ is homologous to $-\partial W_{k,i}$.
From this we conclude in the same way as above that $c^-$ is in $\ell^{\infty}$
and depends smoothly on all parameters.
\end{proof}

Lemma \ref{lemma-period-omega} follows from the following lemma.
\begin{lemma}
\label{lemma-appendix2}
The function
$$\left(\int_{a_{k,i}^0+\epsilon'}^{b_{k,i}^0+\epsilon'}\omega
+ 2\gamma_{k,i}\log t\right)_{k\in\Z,1\leq 
i\leq n_k}$$ extends smoothly at $t=0$ to a smooth function of all parameters
(for the $\ell^{\infty}$ norm).
\end{lemma}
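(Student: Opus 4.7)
The plan is to parametrize the integration path by the local coordinate $v=v_{k,i}$, substitute the Laurent expansion from Lemma \ref{lemma-appendix1}, and integrate term by term. Introduce $v_0 := v_{k,i}(a_{k,i}^0+\epsilon')$ and $w_0 := w_{k,i}(b_{k,i}^0+\epsilon')$. By the finiteness hypothesis these quantities depend smoothly on the parameters and lie, uniformly in $(k,i)$, in a compact set bounded away from $0$ and from $\rho$. Under the neck identification $vw = t^2$, the endpoint $b_{k,i}^0+\epsilon'$ has $v$-coordinate equal to $t^2/w_0$; after choosing a specific path in the annulus $\{t^2/\rho\le|v|\le\rho\}$ (compatible with the path defining $B_{k,i}$, so that a definite logarithmic branch is fixed), the integral reduces to $\int_{v_0}^{t^2/w_0}\omega$.

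The explicit computation is then a termwise integration. The $-\gamma_{k,i}\,dv/v$ term produces $-\gamma_{k,i}\bigl(\log(t^2/w_0)-\log v_0\bigr)=-2\gamma_{k,i}\log t +\gamma_{k,i}(\log v_0+\log w_0)$, whose singular part cancels exactly the $+2\gamma_{k,i}\log t$ added in the statement. The positive-power series from Lemma \ref{lemma-appendix1} integrates to $\sum_{n\ge 0}\frac{r^{n+1}c^+_{k,i,n}}{n+1}\bigl((t^2/w_0)^{n+1}-v_0^{n+1}\bigr)$, a sum of a $t$-independent piece and an $O(t^2)$ remainder. The negative-power series integrates to $-\sum_{n\ge 2}\frac{c^-_{k,i,n}}{n-1}\bigl((rw_0)^{n-1}-(rt^2/v_0)^{n-1}\bigr)$, which dually splits into a $t$-independent, $w_0$-smooth term and an $O(t^2)$ remainder. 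Collecting, the corrected integral extends continuously at $t=0$ with an explicit formula in terms of $v_0$, $w_0$ and the Laurent coefficients.

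The technically delicate final step is to upgrade this to $C^\infty$ smoothness in all parameters for the $\ell^\infty$ norm on the target. For this I would use Lemma \ref{lemma-appendix1}, which provides smoothness of the sequences $c^+$ and $c^-$ as $\ell^\infty$-valued functions of the parameters, combined with the uniform bounds from the finiteness hypothesis placing $rv_0$, $rw_0$, $rt^2/v_0$ strictly inside a disk of radius $\theta<1$ (the geometric decay of the Laurent coefficients, implicit in the proof of Lemma \ref{lemma-appendix1}, providing the rest). Summation of a series $\sum_n\lambda_n x^n$ with $\lambda_n\in\ell^\infty$ and $|x|\le\theta<1$ defines a smooth map into $\ell^\infty$ via the Banach algebra structure on bounded sequences, and smoothness of $\log$ on values bounded away from $0$ completes the assembly.

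The main obstacle I expect is precisely this last $\ell^\infty$ smoothness verification: the transition from pointwise-smooth expressions at each fixed $(k,i)$ to a single $\ell^\infty$-valued $C^\infty$ map requires careful uniform estimates across all $(k,i)\in\Z\times\{1,\dots,n_k\}$. The computation in step two is transparent, but without the uniform geometric control on Laurent tails provided by Lemma \ref{lemma-appendix1} and by the finiteness hypothesis one could not conclude. Once those bounds are in hand, invoking the functional calculus on bounded sequences closes the argument.
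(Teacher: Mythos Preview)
Your proposal is correct and follows essentially the same route as the paper: parametrize by $v=v_{k,i}$, integrate the Laurent expansion of Lemma~\ref{lemma-appendix1} term by term (the $dv/v$ term producing exactly the $-2\gamma_{k,i}\log t$ singularity), and then upgrade to $\ell^\infty$-smoothness via the Banach algebra structure on bounded sequences together with the uniform bounds $|r\varphi_{k,i}|,|r\psi_{k,i}|,|rt^2/\varphi_{k,i}|,|rt^2/\psi_{k,i}|\le\tfrac12$. The paper's write-up makes one small trick explicit that you leave implicit: to invoke smoothness of $x\mapsto(x^n)_{n\in\N}$ on the \emph{open} unit ball, it rescales by $\sqrt 2$ (writing $(r\varphi_{k,i})^{n+1}=(1/\sqrt 2)^{n+1}(r\sqrt 2\,\varphi_{k,i})^{n+1}$) and absorbs the extra geometric factor into a bounded linear summation operator.
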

\begin{proof} Let $\varphi_{k,i}=v_{k,i}(a_{k,i}+\epsilon')$ and
$\psi_{k,i}=w_{k,i}(b_{k,i}+\epsilon')$, so
$v_{k,i}(b_{k,i}+\epsilon')=\frac{t^2}{\psi_{k,i}}$.
Then by lemma \ref{lemma-appendix1}
\begin{eqnarray*}
\int_{a_{k,i}^0+\epsilon'}^{b_{k,i}^0+\epsilon'}\omega
&=&\int_{v=\varphi_{k,i}}^{\frac{t^2}{\psi_{k,i}}}
\left(-\gamma_{k,i}\frac{dv}{v}+
\sum_{n\geq 0}r^{n+1} c_{k,i,n}^+ v^n dv
+\sum_{n\geq 2} (rt^2)^{n-1} c_{k,i,n}^- \frac{dv}{v^n}\right)\\
&=& -\gamma_{k,i}\log t^2 +\gamma_{k,i}\log(\varphi_{k,i}\psi_{k,i})\\
&+&\sum_{n\geq 0}\frac{c_{k,i,n}^+}{n+1}\left(\left(\frac{rt^2}{\psi_{k,i}}\right)^{n+1}
-(r\varphi_{k,i})^{n+1}\right)\\
&+&\sum_{n\geq 2}\frac{c_{k,i,n}^-}{n-1}\left(\left(\frac{rt^2}{\varphi_{k,i}}\right)^{n-1}
-(r\psi_{k,i})^{n-1}\right)
\end{eqnarray*}
By our choice of $\epsilon'$ we have $|r\varphi_{k,i}|\leq\frac{1}{2}$
and $|r\psi_{k,i}|\leq \frac{1}{2}$.
By our choice of $r'$ we have $|\frac{rt^2}{\varphi_{k,i}}|\leq \frac{rr't^2}{2}
\leq \frac{1}{2}$ provided $t^2\leq \frac{1}{rr'}$. 
In the same way $|\frac{rt^2}{\psi_{k,i}}|\leq\frac{1}{2}$.
So the sequences $(r\varphi_{k,i})_{k,i}$, $(r\psi_{k,i})_{k,i}$,
$(\frac{rt^2}{\varphi_{k,i}})_{k,i}$ and
$(\frac{rt^2}{\psi_{k,i}})_{k,i}$ are all in the ball of radius
$\frac{1}{2}$ in $\ell^{\infty}$, and they depend smoothly on all parameters
(they are given by explicit formula).
Let us deal only with the term in the above formula
containing $r\varphi_{k,i}$, as the others are
similar.
We write this term as
$$\sum_{n\geq 0}\frac{c_{k,i,n}^+}{n+1}\left(\frac{1}{\sqrt{2}}\right)^{n+1}
(r\sqrt{2}\varphi_{k,i})^{n+1}.$$
The sequence $(r\sqrt{2}\varphi_{k,i})_{k,i}$ is in the open unit ball
of $\ell^{\infty},$ so
by the fact recalled above about Banach algebras,
the sequence $((r\sqrt{2}\varphi_{k,i})^{n+1})_{k,i,n}$
is in $\ell^{\infty}$ and depends smoothly on parameters.
We multiply by the sequence $(c_{k,i,n}^+)_{k,i,n}$ which depends smoothly
on parameters, and compose with the linear operator which maps the
sequence $(x_{k,i,n})_{k\in\Z,1\leq i\leq n_k,n\in\N}$ in $\ell^{\infty}$ to
$$\left(\sum_{n\geq 0}\frac{1}{n+1}\left(\frac{1}{\sqrt{2}}\right)^{n+1} x_{k,i,n}
\right)_{k\in\Z,1\leq i\leq n_k}.$$
This operator is bounded,
so we conclude that third term in the  expansion
of the integral of $\omega$ depends smoothly on parameters.
We deal with the other terms in the same way.
\end{proof}

To prove lemma \ref{lemma-period-horizontal}, we need a lemma
which is similar to lemma \ref{lemma-appendix2}.
\begin{lemma}
\label{lemma-appendix3}
The function
$$\left(\int_{a_{k,i}^0+\epsilon'}^{b_{k,i}^0+\epsilon'}
v_{k,i}\omega\right)_{k\in\Z,1\leq i\leq n_k}$$
extends smoothly at $\tau=0$ to a smooth function
of all parameters. Moreover, its value at $\tau=0$ is
$$\left(\int_{a_{k,i}^0+\epsilon'}^{a_{k,i}} g_k^{-1}\omega_k
\right)_{k\in\Z,1\leq i\leq n_k}.$$
The function
$$\left(\int_{a_{k,i}^0+\epsilon'}^{b_{k,i}^0+\epsilon'}
w_{k,i}\omega\right)_{k\in\Z,1\leq i\leq n_k}$$
extends smoothly at $\tau=0$ to a smooth function
of all parameters. Moreover, its value at $\tau=0$ is
$$\left(\int_{b_{k,i}}^{b_{k,i}^0+\epsilon'}
g_{k+1}^{-1}\omega_{k+1}
\right)_{k\in\Z,1\leq i\leq n_k}.$$
\end{lemma}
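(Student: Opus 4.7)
The plan is to mirror the proof of Lemma \ref{lemma-appendix2}, but applied to $v_{k,i}\omega$ (respectively $w_{k,i}\omega$) instead of $\omega$ itself. The key observation is that multiplication by the local coordinate $v=v_{k,i}$ raises every exponent in the Laurent expansion of Lemma \ref{lemma-appendix1} by one; in particular the simple pole contributed by the residue $-\gamma_{k,i}$ is cancelled. This is why no renormalization by a $\log t$ term is needed here and the integral extends smoothly at $t=0$ on the nose.

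Concretely, in the annulus $t^2/\rho\leq |v|\leq\rho$ I would write
$$v\omega = -\gamma_{k,i}\,dv + \sum_{n\geq 0} r^{n+1}c_{k,i,n}^+\, v^{n+1}\,dv + \sum_{n\geq 2}(rt^2)^{n-1}c_{k,i,n}^-\,\frac{dv}{v^{n-1}},$$
and integrate from $\varphi_{k,i}=v_{k,i}(a_{k,i}^0+\epsilon')$ to $t^2/\psi_{k,i}=v_{k,i}(b_{k,i}^0+\epsilon')$ along a path through the neck. The constant term contributes $-\gamma_{k,i}(t^2/\psi_{k,i}-\varphi_{k,i})$. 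The positive-power part integrates to a series whose smoothness in the $\ell^\infty$ norm follows from the Banach-algebra argument already used in Lemma \ref{lemma-appendix2}: the sequences $(r\varphi_{k,i})$ and $(rt^2/\psi_{k,i})$ lie in the ball of radius $1/2$ of $\ell^\infty$ by the choice of $r$, $r'$ and $\epsilon'$, so their $(n+2)$-th powers depend smoothly on all parameters, and the factor $1/(n+2)$ provides summability against the bounded sequence $(c_{k,i,n}^+)$. The negative-power part contributes, for $n=2$, a term $rt^2\,c_{k,i,2}^-(2\log t-\log\psi_{k,i}-\log\varphi_{k,i})$, which is smooth in $\tau$ since $t^2\log t=-\tau^{-2}e^{-2/\tau^2}$ extends smoothly to $0$ at $\tau=0$; for $n\geq 3$ the bounds $|rt^2/\varphi_{k,i}|,|r\psi_{k,i}|\leq 1/2$ turn the tail into a geometric series of size $O(t^2)$ uniformly in $(k,i)$, again smooth in $\tau$.

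Setting $\tau=0$ kills every $t$-dependent summand and leaves $\gamma_{k,i}\varphi_{k,i}-\sum_{n\geq 0}\frac{r^{n+1}c_{k,i,n}^+}{n+2}\varphi_{k,i}^{n+2}$, where the coefficients are evaluated at $t=0$. At $t=0$ one has $\omega=\omega_k$ in $\C_k$ by Proposition \ref{proposition-omega} and $v_{k,i}=1/g_k$, so this quantity is exactly $\int_{v=\varphi_{k,i}}^{v=0} v\,\omega_k=\int_{a_{k,i}^0+\epsilon'}^{a_{k,i}} g_k^{-1}\omega_k$, which is the claimed value. The statement for $w_{k,i}\omega$ follows by the symmetric argument: use the Laurent expansion of $\omega$ in the coordinate $w=w_{k,i}$ on $W_{k,i}$ (obtained from the $v$-expansion through $vw=t^2$, under which the residue becomes $+\gamma_{k,i}$), and integrate from $w=t^2/\varphi_{k,i}$ to $w=\psi_{k,i}$; at $\tau=0$ the lower endpoint collapses to $w=0$, corresponding to the node $z=b_{k,i}$ in $\C_{k+1}$.

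The main hurdle is purely technical: establishing that each of the series (for both $v$ and $w$, in both the positive- and negative-power parts) extends to a smooth function from the parameter space to $\ell^\infty$. But this requires no new idea beyond the Banach-algebra fact recalled in the proof of Lemma \ref{lemma-appendix2}, together with the elementary observation that $e^{-2/\tau^2}\log t$ is smooth in $\tau$ and vanishes to all orders at $\tau=0$.
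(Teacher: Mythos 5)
Your proof is correct and follows essentially the same route as the paper: expand $v_{k,i}\omega$ via Lemma \ref{lemma-appendix1}, note that multiplication by $v_{k,i}$ shifts the Laurent expansion so the residue term no longer produces a $\log t$, integrate term by term, handle the $n=2$ negative-power term via the observation that $t^2\log t$ extends smoothly in $\tau$, control the series with the Banach-algebra argument already used in Lemma \ref{lemma-appendix2}, and identify the limit at $\tau=0$; the $w_{k,i}$-statement is the symmetric computation. One small imprecision: the summability of $\sum_n \frac{c^+_{k,i,n}}{n+2}(r\varphi_{k,i})^{n+2}$ in $\ell^\infty$ comes from the geometric decay supplied by the bound $|r\varphi_{k,i}|\leq 1/2$ (the paper splits off an explicit $(1/\sqrt{2})^{n+1}$ factor for this purpose), not from the $1/(n+2)$ factor as you phrase it, but this does not affect the validity of the argument.
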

\begin{proof}
Let us prove the first statement.
Using lemma \ref{lemma-appendix1} we have
\begin{eqnarray*}
\int_{a_{k,i}^0+\epsilon'}^{b_{k,i}^0+\epsilon'}
v_{k,i}\omega
&=& \int_{v=\varphi_{k,i}}^{\frac{t^2}{\psi_{k,i}}}
\left(\sum_{n\geq -1} c_{k,i,n}^+ r^{n+1} v^{n+1} dv
+\sum_{n\geq 2}(rt^2)^{n-1} c_{k,i,n}^- \frac{dv}{v^{n-1}}
\right)\\
&=& \frac{1}{r}\sum_{n\geq -1}\frac{c_{k,i,n}^+}{n+2}
\left(\left(\frac{rt^2}{\psi_{k,i}}\right)^{n+2}
-\left(r\varphi_{k,i}\right)^{n+2}\right)\\
&+& rt^2 c_{k,i,2}^-
(\log t^2 -\log(\varphi_{k,i}\psi_{k,i}))\\
&+& rt^2\sum_{n\geq 3} \frac{c_{k,i,n}^-}{n-2}\left(
\left(\frac{rt^2}{\varphi_{k,i}}\right)^{n-2} -\left(
r\psi_{k,i}\right)^{n-2}\right)
\end{eqnarray*}
The terms on the second and fourth line extend smoothly at
$t=0$ by the same argument as in lemma \ref{lemma-appendix2}.
The term on the third line extends continously
at $t=0$, and only as a smooth function of $\tau$ because
of the term $t^2\log t^2$.
The value of the integral when $t=0$ is
$$\frac{-1}{r}\sum_{n\geq -1}\frac{c_{k,i,n}^+}{n+2}
(r\varphi_{k,i})^{n+2}
=\int_{v=\varphi_{k,i}}^0
\sum_{n\geq -1} c_{k,i,n}^+ r^{n+1} v^{n+1} dv
=\int_{a_{k,i}^0+\epsilon'}^{a_{k,i}} g_k^{-1}\omega_k.$$
The proof of the second statement of lemma
\ref{lemma-appendix3} is similar,
using $w_{k,i}$ instead of $v_{k,i}$ as a coordinate.
\end{proof}

\medskip

We are now ready to prove lemma \ref{lemma-period-horizontal}.
Recall that $g^{(-1)^k}$ is equal to
$t g_k$ in $\C_k$, $(tg_{k+1})^{-1}$ in $\C_{k+1}$,
and $\frac{w_{k,i}}{t}$ in $W_{k,i}$. So we have
$$t\int_{B_{k,i}} g^{(-1)^k}\omega
=t^2\int_{a_{k,i}^0+\epsilon'}^{a_{k,1}^0+\epsilon'}
g_k\omega
+\int_{a_{k,1}^0+\epsilon'}^{b_{k,1}^0+\epsilon'} w_{k,1}\omega
+\int_{b_{k,1}^0+\epsilon'}^{b_{k,i}^0+\epsilon'} g_{k+1}^{-1}
\omega
+\int_{b_{k,i}^0+\epsilon'}^{a_{k,i}^0+\epsilon'}
w_{k,i}\omega.$$
The first and third terms are integrals on fixed paths, so
they depend smoothly on parameters by the smooth dependence
of $\omega$. The second and fourth terms extend smoothly
at $\tau=0$ by the previous lemma.
Moreover, the value at $\tau=0$ is
$$\int_{b_{k,1}}^{b_{k,1}^0+\epsilon'}
g_{k+1}^{-1}\omega_{k+1}
+\int_{b_{k,1}^0+\epsilon'}^{b_{k,i}^0+\epsilon'}
g_{k+1}^{-1}\omega_{k+1}
+\int_{b_{k,i}^0+\epsilon'}^{b_{k,i}}
g_{k+1}^{-1}\omega_{k+1}
=\int_{b_{k,1}}^{b_{k,i}}
g_{k+1}^{-1}\omega_{k+1}.$$
In the same way, we write
$$t\int_{B_{k,i}} g^{(-1)^{k+1}}\omega
=\int_{a_{k,i}^0+\epsilon'}^{a_{k,1}^0+\epsilon'}
g_k^{-1}\omega
+\int_{a_{k,1}^0+\epsilon'}^{b_{k,1}^0+\epsilon'}
v_{k,1}\omega
+t^2\int_{b_{k,1}^0+\epsilon'}^{b_{k,i}^0+\epsilon'}
g_{k+1}\omega
+\int_{b_{k,i}^0+\epsilon'}^{a_{k,i}^0+\epsilon'}
v_{k,i}\omega.$$
This function extends smoothly at $\tau=0$ with value
$$
\int_{a_{k,i}^0+\epsilon'}^{a_{k,1}^0+\epsilon'}
g_k^{-1}\omega_k
+
\int_{a_{k,1}^0+\epsilon'}^{a_{k,1}}
g_k^{-1}\omega_k
+\int_{a_{k,i}}^{a_{k,i}^0+\epsilon'}
g_k^{-1}\omega_k
=\int_{a_{k,i}}^{a_{k,1}}
g_k^{-1}\omega_k.$$
This proves lemma \ref{lemma-period-horizontal}.
\cqfd

\end{document}